\theoremstyle{plain}
\newtheorem{theorem}{Theorem}[section]
\newtheorem{corollary}[theorem]{Corollary}
\newtheorem{proposition}[theorem]{Proposition}
\newtheorem{lemma}[theorem]{Lemma}
\newtheorem{definition}[theorem]{Definition}
\newtheorem*{proposition*}{Proposition}
\newtheorem*{lemma*}{Lemma}
\newtheorem*{corollary*}{Corollary}
\theoremstyle{definition}
\newtheorem*{examples*}{Examples}
\newtheorem*{example*}{Example}
\theoremstyle{remark} 
\newtheorem*{remarks*}{Remarks}
\newtheorem*{remark*}{Remark}
\numberwithin{equation}{section}
\newcommand{\am}[1]{\mdede{#1}{0}{0}{1}}
\newcommand{\km}[1]{\mdede{#1}{0}{#1}{1}}
\newcommand{\nm}[1]{\mdede{1}{#1}{0}{1}}
\newcommand{\qtext}[1]{\quad\text{#1}}
\newcommand{\mdemi}{\tfrac{1}{2}}
\newcommand{\Mdemi}{\frac{1}{2}}
\newcommand {\pnorm}[1]   {\left\lVert #1 \right\rVert}
\newcommand{\BmZ}{\mathbb{Z}}
\newcommand{\FmH}{\mathfrak{H}}
\newcommand{\abs}[1]{\ensuremath{\left|#1\right|}}
\newcommand{\Mun}{\mathds{1}}
\newcommand{\BmR}{\mathbb{R}}
\newcommand{\BmN}{\mathbb{N}}
\newcommand{\SB}{\backslash}
\newcommand{\BmA}{\mathbb{A}}
\newcommand{\BmQ}{\mathbb{Q}}
\newcommand{\TmN}{\mathbf{N}}
\DeclareMathOperator {\GL} {GL}
\newcommand{\FmN}{\mathfrak{N}}
\newcommand{\Tme}{\mathbf{e}}
\newcommand{\Fma}{\mathfrak{a}}
\DeclareMathOperator {\Mvol} {vol}
\DeclareMathOperator {\MIm}  {Im}
\DeclareMathOperator {\Mim}  {\Im m}
\newcommand{\Q}{\mathbb{Q}}
\newcommand{\CmW}{\mathcal{W}}
\newcommand{\Mdede}[4]{
\begin{pmatrix}
#1&#2  \\
#3&#4 \\
\end{pmatrix}
}
\DeclareMathOperator {\Mcodim}{codim}
\newcommand{\set}[1]{\left\{#1\right\}}
\DeclareMathOperator{\MSt}{St}
\newcommand{\Lquote}[1]{``#1"}
\newcommand{\Tmk}{\mathbf{k}}
\newcommand{\mdede}[4]{
\left(
\begin{smallmatrix}
#1&#2  \\
#3&#4 
\end{smallmatrix}
\right)
}
\newcommand{\Tmw}{\mathbf{w}}
\newcommand{\BmC}{\mathbb{C}}
\DeclareMathOperator {\MAd}   {Ad}
\begin{document}
\title[]{Large values of modular forms}
\author[]{Nicolas Templier}   
\address{Department of Mathematics, Fine Hall, Washington Road, Princeton, NJ 08544-1000.}
\email{templier@math.princeton.edu}
\date{\today}
\keywords{Modular forms, sup-norms, horocycles, Whittaker functions, $L$-functions, mean values}
\makeatletter
\@namedef{subjclassname@2010}{%
  \textup{2010} Mathematics Subject Classification}
\makeatother
\subjclass[2010]{11F70,11F41}

\begin{abstract} We show that there are primitive holomorphic modular forms $f$ of arbitrary large level $N$ such that $ \abs{f(z)} \gg N^{\frac14}$ for some $z\in \FmH$. Thereby we disprove a folklore conjecture that the $L^\infty$-norm of such forms would be as small as $N^{o(1)}$.
\end{abstract}

\maketitle




\section{Introduction}
L
et $N\ge 1$ and $\Gamma_0(N) \subset SL_2(\BmZ)$ be the Hecke congruence subgroup of level $N$. Let $\chi$ be a Dirichlet character modulo $N$ and $S_2(N,\chi)$ the vector space of holomorphic cusp forms of weight two, level $N$ and nebentypus $\chi$. Let $S_2^*(N,\chi)$ be the set of primitive forms
\begin{equation}\label{def:f}
  f(z)= y \sum^{\infty}_{n=1} a_n n^{\frac12} e^{2i\pi nz},\quad z=x+iy\in \FmH.
\end{equation}
Recall that primitive forms satisfy $a_1=1$ and are eigenvalues of all Hecke operators $T_n$, $n\ge 1$, see~e.g~\cite{book:Miy}*{\S4.6}. The $L$-function $L(s,f)=\sum\limits^\infty_{n=1} \dfrac{a_n}{n^s}$ is entire and admits a functional equation and an Euler product.

The absolute value $\abs{f(z)}$ is $\Gamma_0(N)$-invariant and bounded on $\FmH$. It is of great interest to estimate the sup-norm $\pnorm{f}_\infty$. Such estimates are closely related to quantum ergodicity and entropy bounds~\cite{BL03}, they occur in the subconvexity problem for $L$-functions~\cite{HM06}, in the distribution of zeros of modular forms~\cites{GS12,GRS12} and in the study of Arakelov invariants of $X_0(N)$~\cites{AU95,MU98,JK06}.

In~\cite{AU95}*{Thm.\,A} for $N$ square-free and in~\cite{JK04}*{Cor.\,3.2} in general, it is proven that $\pnorm{f}_\infty \ll_\epsilon N^{\frac12+\epsilon}$ for all $\epsilon>0$. Blomer--Holowinsky~\cite{BH09} show that $\pnorm{f}_\infty \ll N^{\Mdemi-\delta}$ for $N$ square-free and $\delta<25/914$. Further improvements in~\cite{Temp:sup} and~\cites{Temp:hybrid,Temp:level-III} give $\delta<1/6$, thus 
\begin{equation} 
  \label{HT}
  1\ll \pnorm{f}_\infty \ll_\epsilon N^{\frac13+\epsilon}.
\end{equation}

\subsection{A folklore conjecture}\label{sec:intro:conj}
Conversely, how large can $\abs{f(z)}$ be? A folklore conjecture is that 
\begin{equation}\label{folklore}\tag{A}
  \pnorm{f}_\infty \stackrel{?}{\ll_\epsilon} N^{\epsilon}, \qtext{for all $\epsilon>0$}.
\end{equation}
See~e.g.~\cite{BH09}*{Eq.\,(5)}\footnote{\cite{BH09} record this conjecture with the two reservations that it is optimistic and that it is necessary that $f$ be an Hecke eigenform since it fails for holomorphic Poincar\'e series.} and \cite{JK04}*{Rem.\,3.3}. There have been partial evidences in favor of the estimate~\eqref{folklore}: it is true on average over an orthonormal basis of $S_2(N,\chi)$; it would imply the Lindel\"of hypothesis in the level aspect; it directly applies to old forms; and it is consistent with square-root cancellation of sums of Fourier coefficients.

\subsection{Main result}
We show that the estimate~\eqref{folklore} is false:
\begin{theorem}\label{th:main}
  There are primitive forms $f\in S_2^*(N,\chi)$ of arbitrary large level $N$ such that $\pnorm{f}_\infty \gg N^{\frac{1}{4}}$. 
\end{theorem}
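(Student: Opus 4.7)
The most natural approach is to construct $f$ as a Dirichlet twist. For each sufficiently large prime $p$, fix a primitive character $\psi$ modulo $p$ with $\psi^2 \neq 1$, and a primitive newform $g \in S_2^*(p,1)$. Set $f := g \otimes \psi$, with Fourier coefficients $a_n(f) = \psi(n)\, a_n(g)$ for $\gcd(n,p) = 1$. Standard twist theory shows $f$ is a primitive newform in $S_2^*(p^2, \psi^2)$, so the level is $N = p^2$ and the target bound is $N^{\frac14} = p^{\frac12}$. The crucial analytic identity is
\[
  \tau(\bar\psi)\, f(z) = \sum_{a \bmod p} \bar\psi(a)\, g(z + a/p), \qquad z \in \FmH,
\]
with $|\tau(\bar\psi)| = \sqrt{p}$.

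Since $|f(z)| \leq \sqrt{p}\, \|g\|_\infty$ is immediate, the task reduces to exhibiting a test point $z_0$ for which the character sum above has magnitude of order $p$. The natural candidate is $z_0 = x_0 + i y_0$ with $y_0$ of order $1$, so that the $p$ translates $z_0 + a/p$ lie in the bulk of $X_0(p)$ where $|g|$ is of order $\|g\|_\infty$. One then expands each $g(z_0 + a/p)$ via the Fourier expansions of $g$ at the two cusps of $\Gamma_0(p)$ (which are linked by the Atkin--Lehner involution $W_p$). The result is an exponential sum over $(\BmZ/p\BmZ)^*$ that couples the Hecke eigenvalues of $g$, the character $\psi$, and standard additive characters modulo $p$.

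The main obstacle is that for a generic point $z_0$ this exponential sum admits Weil-type square-root cancellation, yielding only $|f(z_0)| = O(1)$. To obtain $|f(z_0)| \gg p^{\frac12}$ one needs a resonance: $z_0$ must be chosen so that the phases of the values $g(z_0 + a/p)$ correlate nontrivially with $\psi(a)$. A structural explanation of why such resonance should be possible comes from the local picture: the ratio of the sup-norm to the $L^2$-norm of the local Whittaker new vector at $p$ is of order $p^{\frac12}$ for local representations of conductor $p^2$, and the $N^{\frac14}$ bound is precisely this local gain transposed to the arithmetic normalization $a_1 = 1$. Realizing the resonance explicitly may be done either by taking $z_0$ to be a CM point attached to an imaginary quadratic field whose quadratic character combines favorably with $\psi$, or by a second-moment argument averaging over $\psi$ or over the family of $g$'s, producing the existence of a suitable pair $(\psi, z_0)$ without pinning it down. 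The heart of the proof is this final resonance step; the remainder is bookkeeping around the Gauss-sum identity and verifying that the construction indeed produces primitive newforms of level $p^2$.
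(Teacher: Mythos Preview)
Your proposal has a genuine gap: you identify the ``resonance step'' as the heart of the argument and then do not carry it out, offering only vague pointers to CM points or second moments. More seriously, the construction $f=g\otimes\psi$ with $g\in S_2^*(p,1)$ is unlikely to admit any such resonance. The local component of $f$ at $p$ is a \emph{ramified twist of Steinberg}, and the nebentypus $\psi^2$ has conductor $p$, not $p^2$. Your heuristic that the local Whittaker newvector has sup-norm of order $p^{1/2}$ for \emph{every} representation of conductor $p^2$ is false: the paper notes (remark following the proof of Theorem~\ref{th:main} in \S\ref{sec:gauss}) that already for $\chi_1\boxplus\chi_2$ with both characters of conductor $p$ one has $h(\chi_1\boxplus\chi_2)\asymp 1$, not $p^{1/2}$. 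Indeed the paper records in \S\ref{conj:newforms} that $\pnorm{f\otimes\eta}_\infty=\pnorm{f}_\infty$ for any Dirichlet character $\eta$; granting this, your $f$ satisfies $\pnorm{f}_\infty=\pnorm{g}_\infty\ll p^{1/3+\epsilon}$ by the known bound~\eqref{HT} for square-free level, strictly below the target $p^{1/2}$.

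The paper's route differs in a decisive way: one takes the nebentypus $\chi$ to be \emph{primitive} modulo $N=p^2$. Then every $f\in S_2^*(N,\chi)$ is automatically twist-minimal, and its local component $\pi_p$ is a principal series $\chi_1\boxplus\chi_2$ with $\chi_1$ unramified and $\chi_2$ of conductor $p^2$ (Corollary~\ref{cor:newps}). One computes the Whittaker newvector $W_\circ$ of such a $\pi_p$ explicitly via the Jacquet integral (Proposition~\ref{prop:offdiag}) and finds that it attains the value $p^{1/2}$ at an explicit off-diagonal point $\am{y}\Tmk_1$. The elementary inequality $\pnorm{f}_\infty\ge (2\pi e)^{-1}\prod_p h(\pi_p)$ of Lemma~\ref{lem:lower} then gives $\pnorm{f}_\infty\gg p^{1/2}=N^{1/4}$ directly, with no averaging and no search for a resonance: the large value is a purely local phenomenon, pinned down by the Whittaker computation, and it is visible precisely because the representation is twist-minimal --- the opposite regime from your construction.
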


\begin{remarks*}
  \begin{enumerate}[(i)]
	\item The multiplicative constant is absolute and indeed we shall produce an explicit value such as $\pnorm{f}_\infty \ge (2\pi e)^{-1} N^{\frac{1}{4}}$, where $\frac{1}{2\pi e}= 0.0585498\ldots$. 
	  \item Our method is specific to levels $N$ which are not square-free and to the non-compact case. It remains an open problem whether similar lower bounds exist for square-free levels or for compact surfaces. 
	  \item One can see a similar phenomenon for Hecke eigenfunctions of the quantized cat map as shown by Olofsson~\cite{Olof:large}. In fact the same exponent $N^{\frac14}$ occurs where $N$ denotes the underlying quantum multiplicity. Also it is interesting to compare Proposition~\ref{prop:neigh} below with~\cite{Olof10} and Theorem~\ref{th:levels} below with \cite{Olof:large}*{Thm.\,3.2}.
  \end{enumerate} 
\end{remarks*}

		Conditional under the GRH it is shown by Lau~\cite{Lau10:omega}*{Thm.\,1} that there are primitive forms $f$ of arbitrary large level $N$ such that
\begin{equation*} 
  \pnorm{f}_\infty \ge \exp\left( (\Mdemi+o(1))\sqrt{\frac{\log N}{\log \log N}} \right).
\end{equation*}
The method of proof is via the Hecke integral (see~\eqref{hecke} below) and Soundararajan's resonance method~\cite{Sound:extreme} to produce omega results for central values of $L$-functions. There are analogous results of Mili{\'c}evi{\'c}~\cite{Mili:large} in the eigenvalue aspect.
 
Other results on lower bounds for the sup-norm of automorphic forms in general may be found in Rudnick--Sarnak~\cite{RS94:eigenstate} and Lapid--Offen~\cite{LO07}. 
The mechanism there is quite different since large values arise from functorial lift from other groups. 
In our $\GL(2)$ situation dihedral forms could play a similar role but we don't know whether they exhibit special behavior with respect to sup-norms. 

Here we shall establish the following which surprisingly shows that the primitive forms $f\in S_2^*(N,\chi)$ with large sup-norm are actually abundant: 
\begin{theorem}\label{th:levels}
For any primitive character $\chi$ (mod $N$) and any form $f\in S_2^*(N,\chi)$,
\[
\pnorm{f}_\infty \gg \prod_{p^c || N} p^{\frac12 \lfloor \frac{c}{2} \rfloor}
\]
where $p^c || N$ for a prime $p$ and an integer $c\ge 0$ means that $p^c\mid N$ and $p^{c+1}\nmid N$.
\end{theorem}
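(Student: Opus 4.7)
My plan is to obtain the sup-norm lower bound by showing that the first Fourier coefficient of $f$ at a well-chosen non-standard cusp $\mathfrak{a}$ of $\Gamma_0(N)$ has magnitude $\gg Q^{1/2}$, where $Q := \prod_{p^{c}||N} p^{\lfloor c/2 \rfloor}$, and then extracting a large pointwise value by Parseval averaging along the horocycle at $\mathfrak{a}$. Since $Q^2 \mid N$, the denominator $Q$ supports a cusp of $\Gamma_0(N)$; pick $\mathfrak{a} = a/Q$ with $\gcd(a,Q)=1$, and let $\sigma_\mathfrak{a} \in SL_2(\BmR)$ be a scaling matrix sending $\infty$ to $\mathfrak{a}$. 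Writing $f(z) = \MIm(z)\, F(z)$ for the underlying holomorphic weight-$2$ form $F$, one has the Fourier expansion
\[
F\bigl|_2\sigma_\mathfrak{a}(z) = \sum_{n\ge 1} \rho_\mathfrak{a}(n)\, \sqrt{n}\, e(nz),
\]
so that $\lvert f(\sigma_\mathfrak{a}(x+iy))\rvert = y \bigl\lvert F|_2\sigma_\mathfrak{a}(x+iy)\bigr\rvert$.

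The heart of the argument is a local computation of $\rho_\mathfrak{a}(1)$. Since $\chi$ is primitive mod $N$, at every $p\mid N$ the local central character $\chi_p$ has conductor $p^{c_p}$, hence the local component $\pi_p$ of the automorphic representation attached to $f$ has conductor $p^{c_p}$ with central character of the same conductor. Using the factorization of the global Whittaker function over places, I would derive
\[
\rho_\mathfrak{a}(1) = \prod_{p^{c_p}||N} \rho_p,
\]
where $\rho_p$ is a local contribution controlled by the new-vector in $\pi_p$. An explicit analysis---splitting into ramified principal series, Steinberg twists, and supercuspidal cases---yields $\lvert \rho_p \rvert \gg p^{\lfloor c_p/2\rfloor/2}$ with absolute implied constant, for every $c_p \ge 2$. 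The gain is of Gauss-sum origin: the Gauss sum of $\chi_p$ has modulus $p^{c_p/2}$, a factor $p^{\lceil c_p/2\rceil/2}$ of which is absorbed by the $L^2$-renormalization relating the new-vector at $\infty$ to its translate at $\mathfrak{a}$, leaving $p^{\lfloor c_p/2\rfloor/2}$ as net gain. Taking the product yields $\lvert \rho_\mathfrak{a}(1)\rvert \gg Q^{1/2}$.

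The sup-norm bound then follows by Parseval at height $y_0 = (2\pi)^{-1}$:
\[
\int_0^1 \bigl\lvert f(\sigma_\mathfrak{a}(x+iy_0))\bigr\rvert^2\, dx \;=\; y_0^2 \sum_{n\ge 1} \lvert\rho_\mathfrak{a}(n)\rvert^2\, n\, e^{-4\pi n y_0} \;\ge\; (2\pi e)^{-2}\, \lvert \rho_\mathfrak{a}(1)\rvert^2.
\]
Thus for some $x_0 \in [0,1]$, $\bigl\lvert f(\sigma_\mathfrak{a}(x_0+iy_0))\bigr\rvert \ge (2\pi e)^{-1}\, \lvert \rho_\mathfrak{a}(1)\rvert \gg Q^{1/2}$, and $\Gamma_0(N)$-invariance of $\lvert f\rvert$ concludes.

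The principal obstacle is the local computation---showing $\lvert\rho_p\rvert \gg p^{\lfloor c_p/2\rfloor/2}$ at each ramified prime. It splits into cases by the type of $\pi_p$ and requires explicit knowledge of the new-vector---through the Kirillov model, or through a Weil-representation construction in the supercuspidal case---along with a careful comparison of normalizations between the cusps $\infty$ and $\mathfrak{a}$. The Parseval step and the existence of a cusp with denominator $Q$ are comparatively routine.
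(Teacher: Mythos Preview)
Your overall strategy coincides with the paper's: both extract a large Fourier--Whittaker coefficient and bound the sup-norm below by it. The paper phrases this adelically (Lemma~\ref{lem:lower}): $\pnorm{f}_\infty \ge (2\pi e)^{-1}\prod_p h(\pi_p)$ where $h(\pi_p)=\max_g |W_{\circ,p}(g)|$ is the maximum of the local Whittaker newvector; your cusp $a/Q$ and the Parseval step are a classical translation of the same inequality. So the architecture is right.

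The gap is in your local analysis. You propose to split into ramified principal series, Steinberg twists, and supercuspidals and to prove $|\rho_p|\gg p^{\lfloor c_p/2\rfloor/2}$ in each case. But that bound is \emph{false} for general ramified $\pi_p$: for instance if $\pi_p=\chi_1\boxplus\chi_2$ with both $\chi_i$ of conductor $p$ (so $c_p=2$), one has $h(\pi_p)\asymp 1$, not $p^{1/2}$ (see the remark following the proof of Theorem~\ref{th:main} in the paper). What rescues the argument is the hypothesis you recorded but did not exploit: since $\chi$ is primitive, the local central character has the \emph{same} conductor as $\pi_p$, and this forces $\pi_p$ to be a \emph{twist-minimal} principal series $\chi_1\boxplus\chi_2$ with $\chi_1$ unramified (Lemma~\ref{lem:ps} / Corollary~\ref{cor:newps}); Steinberg and supercuspidal simply do not occur. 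For precisely those representations the local Whittaker newvector does attain $p^{\frac12\lfloor c_p/2\rfloor}$ (Proposition~\ref{prop:hpi}). So your case-split is both unnecessary and, taken at face value, unprovable; the missing ingredient is the structural reduction to twist-minimal principal series.

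A secondary point: the numerator $a$ in your cusp $a/Q$ is not free. Already for $N=p^2$ the paper shows (Proposition~\ref{prop:offdiag}, Theorem~\ref{th:point}) that $W_\circ(\am{y}\Tmk_1)=p^{1/2}$ only for the unique $y\in p^{-2}\BmZ_p^\times$ determined by $\chi$, which pins down $a\pmod p$; for the wrong $a$ the first Fourier coefficient at $a/p$ vanishes. In general $a$ must be chosen compatibly with $\chi$ prime by prime, which is exactly what the adelic formulation handles automatically.
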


 Since $\chi$ is primitive, each form $f\in S_2^*(N,\chi)$ is twist-minimal in the sense that for any Dirichlet character $\eta$ the level of the form $f\otimes \eta$ is at least $N$ (see Lemma~\ref{lem:ps}).

\begin{examples*}\begin{enumerate}[(i)] 
  \item If $N$ is an even power of a prime or more generally a perfect square, then the right-hand side is equal to $N^{\frac14}$. Thus we have $\pnorm{f}_\infty \gg N^{\frac14}$ for \emph{all} primitive $\chi$ (mod $N$) and \emph{all} $f\in S_2^*(N,\chi)$ if $N$ is a  square.  
  \item If $N$ is the third power of a square-free integer, then the right-hand side is equal to $N^{\frac16}$. If $N$ is square-full in the sense that $p\mid N$ implies $p^2\mid N$ for all primes $p$, then the right-hand side is between $N^{\frac16}$ and $N^{\frac14}$.
\end{enumerate}
\end{examples*}

One may now wonder what the true size of $\pnorm{f}_\infty$ should be. A possible answer is to expect a purity phenomenon~\cite{Sarn:Morawetz}: the accumulation points of the ratio $\dfrac{\log \pnorm{f}_\infty}{\log N}$ could be restricted to a certain set $E$ of exponents.
Purity conjectures are put forward by Clozel and Sarnak~\cite{Sarnak:GRC} in analogy with Deligne's purity theorem in algebraic geometry. Purity can for example accommodate the exceptions to Ramanujan type bounds. The question is to determine what these accumulation points should be. What is known so far is that $E\subset [0,\frac{1}{3}]$ by~\eqref{HT} and by Theorem~\ref{th:main} that the intersection $E\cap [\frac{1}{4},\frac{1}{2}]$ is non-empty.

Since in Theorem~\ref{th:levels} we have that $\frac{1}{2c}\lfloor \frac{c}{2} \rfloor$ takes infinitely many values below $\frac{1}{4}$, this suggests  caution when speculating on the purity of $\dfrac{\log \pnorm{f}_\infty}{\log N}$. For example it could be that the accumulation set $E$ is infinite although our argument is not yet conclusive since a corresponding upper bound for $\pnorm{f}_\infty$ is not known.

\subsection{Hilbert modular forms}
Our results and methods of proof generalize to automorphic forms on $\GL(2)$. Let $F$ be a totally real number field of degree $d$. 
We show that there are Hilbert modular forms $f$ of weight $(k_1,\cdots,k_d)$ and level $\FmN$ such that
				\begin{equation*} 
				  \pnorm{f}_\infty \gg (k_1\cdots k_d)^{\frac14} \TmN_{F/\BmQ}(\FmN)^{\frac{1}{4}} \pnorm{f}_2.
				\end{equation*}
The general result is stated below in Theorem~\ref{th:hilbert} and includes Maass forms. Let $\pi=\otimes_v \pi_v$ be an unitary cuspidal automorphic representation of $\GL(2,\BmA_F)$.
If the place $v$ is non-archimedean, then $\pi_v$ has level $\FmN_v$. 
If the place $v$ is archimedean, then either $\pi_v$ is a discrete series (resp. limit of discrete series) of some weight $k\ge 2$ (resp. of weight $k=1$) and we denote by $h$ the number of such places; or $\pi_v$ is a principal or complementary series of spectral parameter $r > 0$ and we denote by $m$ the number of such places. Thus $h+m=d$. 
The representation $\pi$ is generated by a unique Hilbert--Maass newform $f\in \pi$.
Hilbert--Maass forms belongs to $L^2_\omega(\GL(2,F)\SB \GL(2,\BmA_F))$ where $\omega$ is the central character.
\begin{theorem}\label{th:hilbert}
  Fix $\epsilon>0$ and a totally real number field $F$ of degree $d$.
  For all but finitely many tuples of integers $k_1,\ldots, k_h, t_1,\ldots t_m \in \BmN_{\ge 1}$ and square integral ideals $\FmN$, there are Hilbert-Maass newforms $f$ of level $\FmN$, archimedean type $(h,m)$ with weights $(k_1,\ldots,k_h)$ and spectral parameters in the respective intervals $[t_1-1,t_1], \ldots, [t_m-1,t_{m}]$ such that 
\[
\pnorm{f}_\infty \gg (t_1 \cdots t_m)^{\frac16-\epsilon} (k_1\cdots k_h)^{\frac14-\epsilon} \TmN_{F/\BmQ}(\FmN)^{\frac14-\epsilon} \pnorm{f}_2.
\]
 The multiplicative constant may depend only on $F$ and $\epsilon>0$.
\end{theorem}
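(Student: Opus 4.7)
The approach is to lift the argument for Theorem~\ref{th:levels} to the adelic setting by working with the factorized Whittaker model of $\pi=\bigotimes_v \pi_v$. Writing $f$ as the pure tensor newform, its global Whittaker function factorizes as $W=\prod_v W_v$ and the Fourier expansion at a point $g\in \GL(2,\BmA_F)$ reads
\[
  f(g) = \sum_{\alpha\in F^\times} W\left(\mdede{\alpha}{0}{0}{1} g\right).
\]
The plan is to choose a base point $g_\ast = \prod_v g_v$ where each local factor $|W_v(g_v)|$ is near-maximal, arranging that $g_\ast$ lies in a fundamental domain for $\GL(2,F)\SB \GL(2,\BmA_F)/K$ so that only the term $\alpha = 1$ contributes appreciably, the others decaying super-polynomially in the archimedean coordinates.

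The computation then factors into local pieces. (i) At a non-archimedean place $v\mid\FmN$ with $\FmN_v = v^{c_v}$, the argument behind Theorem~\ref{th:levels} shows that on a suitable diagonal element, $|W_v(g_v)|^2/\|W_v\|_2^2 \gg q_v^{c_v/2}$; this uses twist-minimality of $f$ at $v$, which is automatic since the nebentypus is primitive. (ii) At a holomorphic archimedean place of weight $k_j$, the Whittaker function is proportional to $(4\pi y)^{k_j/2} e^{-2\pi y}$, peaked at $y\sim k_j/(4\pi)$; Stirling applied to the $L^2$ norm $\sim \Gamma(k_j)^{1/2}$ yields a normalized peak of order $k_j^{1/4}$. (iii) At a non-holomorphic archimedean place with spectral parameter $r\in[t_j-1,t_j]$, the Whittaker function $y^{1/2}K_{ir}(2\pi y)$ has a transition region near $y \sim r/(2\pi)$ where Olver's uniform asymptotics give $|K_{ir}(2\pi y)| \gg r^{-1/3}$; after $L^2$ normalization this contributes a factor of $t_j^{1/6-\epsilon}$, with the $\epsilon$-loss accommodating a careful choice of $y$ inside the transition range.

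Multiplying the local contributions and comparing to $\|f\|_2$ via Macdonald's formula or a direct Rankin--Selberg calculation yields the stated lower bound. The finitely many excluded tuples correspond to small parameters where the archimedean Whittaker transition regions cannot simultaneously be placed inside a fundamental domain for $\mathcal{O}_F^\times$ acting on the archimedean factor; for large enough parameters this is automatic by Dirichlet's unit theorem. The main obstacle is case (iii): obtaining the sharp $t_j^{1/6-\epsilon}$ factor requires uniform control on $K_{ir}$ near its turning point with an error term that does not outweigh the transition peak, and this is the origin of both the $\epsilon$-loss and the exceptional-tuple qualification. The non-archimedean and holomorphic pieces, by contrast, are clean and mirror the arguments already used for Theorems~\ref{th:main} and~\ref{th:levels}.
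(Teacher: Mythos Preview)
Your overall architecture matches the paper's: factor the global Whittaker function, estimate the local peak-to-$L^2$ ratio at each place, and reassemble. Two points are misidentified, and one of them is a genuine missing ingredient.

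\textbf{Source of the $\epsilon$-loss.} The archimedean estimates are sharp: for a principal series one has $\max_y y^{1/2}|K_{ir}(2\pi y)|/(W,W)^{1/2}\asymp r^{1/6}$ with no loss, and likewise $\asymp k^{1/4}$ for discrete series. The $\epsilon$ enters instead through the Rankin--Selberg step you invoke only in passing. The identity is
\[
\pnorm{f}_2^2 = c_F\, L(1,\pi,\MAd)\,(W_\infty,W_\infty)\prod_{v\nmid\infty}\frac{(W_{\circ v},W_{\circ v})}{L(1,\pi_v,\MAd)},
\]
so passing from the product of local ratios $\max|W_v|^2/(W_v,W_v)$ to $\pnorm{f}_\infty/\pnorm{f}_2$ costs a factor $L(1,\pi,\MAd)^{-1/2}$. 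One then needs the convexity bound $L(1,\pi,\MAd)\ll_\epsilon (t_1\cdots t_m\,k_1\cdots k_h\,\TmN(\FmN))^\epsilon$, and \emph{that} is where the $\epsilon$ comes from. Without this step your argument has a gap: the local pieces alone do not control $\pnorm{f}_2$.

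\textbf{Source of the finite exceptional set.} The exclusion of finitely many tuples is not about fitting archimedean transition regions into a unit fundamental domain. It is an \emph{existence} issue: one needs, for the given $(k_j,t_j,\FmN)$, a newform whose local component at every $v\mid\FmN$ is a twist-minimal principal series (so that Proposition~\ref{prop:hpi} applies). For all but finitely many parameter tuples such forms exist by Plancherel-density results for prescribed local components. Your primitive-nebentypus remark points in the right direction (cf.\ Corollary~\ref{cor:newps}) but does not by itself guarantee nonemptiness of the relevant family.

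\textbf{A simplification.} You do not need to isolate the $\alpha=1$ term in the Fourier expansion or worry about units. Since $F\backslash\BmA_F$ has volume one, the definition $W(g)=\int_{F\backslash\BmA_F}\varphi(n(x)g)\overline{\psi(x)}\,dx$ immediately gives $\pnorm{f}_\infty\ge\sup_g|W(g)|$, which then factors; this is how the paper proceeds and it bypasses the fundamental-domain considerations entirely.
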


\subsection{Whittaker models and newvector theory}

 We will introduce a local invariant to study large values of modular forms, the key novelty being Proposition~\ref{prop:largew} below. 
It is shown by Iwaniec--Sarnak in the corrigendum in~\cite{Sarn:Morawetz} of~\cite{IS95}*{Lem.\,A.1} that a Hecke--Maass form $f$ on $SL(2,\BmZ)\SB \FmH$ of large spectral parameter $r$ satisfies $\pnorm{f}_\infty \gg_\varepsilon r^{\frac16-\varepsilon}\pnorm{f}_2$. This is related to the asymptotic behavior of the $K$-Bessel function in the transition range. 

This paper shows that 
large values are a general phenomenon that occurs not only in the eigenvalue aspect but also in the level aspect. In the level aspect one might hope that exponential sums will play the role of special functions. Thus our starting point has been to draw in the analogy between archimedean and non-archimedean places and to look for a non-archimedean analogue of the transition range of the $K$-Bessel function.  Such analogy isn't obvious but indeed exists as we now explain.

We first isolate the following as the key of the lower bound~\cite{Sarn:Morawetz} for Hecke--Maass forms:
\begin{equation} \label{kirillov-transition}
\max_{y>0}{
y^{\frac12}\abs{K_{ir}(y)} \asymp r^{\frac16}
} e^{-\frac{\pi r}{2}}, \qtext{as $r\to \infty$}.
\end{equation}
See \S\ref{sec:intro:ideas} and \S\ref{sec:pf:hilbert} for details.

Now $y^{\frac12}K_{ir}(y)$ is the spherical vector in the Kirillov model of a principal series representation of $\GL(2,\BmR)$. So in analogy it is natural to investigate the newvector in the Kirillov model of a ramified representation $\pi$ of $\GL(2,\BmQ_p)$. Explicit formulas are known~\cites{Cass73,book:moduII:deligne}, namely we find 
\begin{equation*}
 \eta(y)\abs{y}^{\frac12}\Mun_{\BmZ_p}(y)
\quad \text{or}
\quad 
 \Mun_{\BmZ_p^\times}(y)
\end{equation*}
depending on the representation (here $\eta$ is an unramified character of $\BmQ_p^\times$).
The analogy with the archimedean $y^{\frac12}K_{ir}(y)$ is good; we observe the same oscillatory behavior as $y\to 0$ and the same rapid decay as $y\to \infty$.

However there is a difference which is that the Kirillov newvector in the non-archimedean case doesn't exhibit any transition range as a function in the $y$-variable.
In particular there is no analogue of~\eqref{kirillov-transition}, the Kirillov function sharply goes from one to zero between $\abs{y}= 1$ and $\abs{y}>1$.

Our solution is to introduce a local invariant $h(\pi)$ of generic representations of $\GL(2,\BmQ_p)$. It is defined as the maximum value of a newvector $W_{\circ}$ in the Whittaker model. We have that $W_{\circ}(\Mdede{y}{0}{0}{1})$ is the newvector in the Kirillov model as above. Our main point is that $W_{\circ}$ achieves large values outside of the diagonal. Thus the correct analogue of~\eqref{kirillov-transition} and the transition region in the non-archimedean case is to be found in the Whittaker function as opposed to the Kirillov function.  We establish the following result in Section~\ref{sec:gauss} which implies $h(\pi)=p^{\frac12}$ for certain type of representations: 
\begin{proposition}\label{prop:largew}
  For a twist-minimal unitary principal series representation of $\GL(2,\BmQ_p)$ of conductor $p^2$ the newvector $W_\circ$ in the Whittaker model satisfies
  \begin{equation*} 
	\max_{g\in \GL(2,\BmQ_p)} \abs{W_\circ(g)} = p^{\frac12} \abs{W_\circ(\Tme)}.
  \end{equation*}
\end{proposition}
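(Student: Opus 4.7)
The plan is as follows.

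\emph{Normal form and Kirillov model.} A unitary principal series $\pi=\pi(\chi_1,\chi_2)$ has conductor $p^{a(\chi_1)+a(\chi_2)}$, and twist-minimality at conductor $p^2$ rules out the split $a(\chi_1)=a(\chi_2)=1$ (which can be twisted by $\chi_1^{-1}$ to conductor at most $p$), leaving $\pi\cong\pi(\chi,1)$ with $a(\chi)=2$, up to swapping $\chi_1\leftrightarrow\chi_2$. For this representation $L(s,\pi)=\zeta_p(s)$, and Mellin--$L$ matching identifies the newvector in the Kirillov model as $W_\circ(\am{y})=|y|^{1/2}\Mun_{\BmZ_p}(y)$. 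In particular $|W_\circ(\Tme)|=1$ and $|W_\circ|\le 1$ on the Borel subgroup $B$.

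\emph{Reduction to cell representatives.} The function $|W_\circ|$ is right $K_0(p^2)$-invariant, and the Whittaker transformation $W_\circ(ng)=\psi(n)W_\circ(g)$ shows $|W_\circ(nag)|=|W_\circ(ag)|$. The supremum is therefore attained over cell representatives $k_0\in B(\BmZ_p)\backslash K/K_0(p^2)$ and torus elements $a\in A$. The relevant double-coset space has three cells, represented by $\Tme$, by the Weyl element $w$, and by the intermediate element $\mdede{1}{0}{p}{1}$. The identity cell contributes $\le 1$ by the Kirillov bound; a direct application of the Jacquet integral $W_\circ(g)=\int_{\BmQ_p}\phi_\circ(wn(t)g)\bar\psi(t)\,dt$ on the Weyl cell also yields values bounded by $1$.

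\emph{Appearance of the Gauss sum on the intermediate cell.} The critical case is $g_\star=\am{y_\star}\cdot\mdede{1}{0}{p\alpha}{1}$ with $\alpha\in\BmZ_p^\times$ and $y_\star\in\BmQ_p^\times$ chosen appropriately. Bruhat-decomposing $wn(t)g_\star$, the support condition that the $K$-part of the Iwasawa decomposition lies in $BK_0(p^2)$ localizes $t$ to a single coset $t_0+\BmZ_p\subset\BmQ_p$ of measure $1$. Parameterizing $t=t_0+s$ with $s\in\BmZ_p$ and expanding $\chi(1+p\alpha s)$ via the canonical isomorphism $(1+p\BmZ_p)/(1+p^2\BmZ_p)\cong\BmZ/p$ (non-trivial because $a(\chi)=2$), the integrand reduces to the product of a non-trivial multiplicative character and a non-trivial additive character of $\BmZ/p$. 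The $s$-integral collapses to the classical Gauss sum of a primitive character of $\BmZ/p$, which has absolute value exactly $p^{1/2}$. Tuning $y_\star$ so that the residual phase factors align yields $|W_\circ(g_\star)|=p^{1/2}|W_\circ(\Tme)|$, and the same Gauss-sum analysis provides the matching upper bound.

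\emph{Main obstacle.} The delicate step is the Bruhat bookkeeping for the intermediate cell, in particular verifying that the support in $t$ has measure exactly $1$. This ``resolution $p$'' is what matches the ramification of $\chi$ on $1+p\BmZ_p$: it produces a Gauss sum of conductor $p$ (magnitude $p^{1/2}$), rather than of conductor $p^2$ (magnitude $p$) or of trivial conductor (magnitude $1$).
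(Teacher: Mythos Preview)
Your architecture matches the paper's: reduce to $\pi=\Mun\boxplus\chi$ with $c(\chi)=2$, split $G$ into the three double cosets $B\Tmk_i I_2$ ($i=0,1,2$), bound the diagonal and antidiagonal cells by $1$, and compute the Jacquet integral on the intermediate cell $\Tmk_1$. The error is in your description of the mechanism on that cell.

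The resulting integral is \emph{not} a Gauss sum. In the paper's computation (Proposition~\ref{prop:offdiag}) the support of $f_\circ(\Tmw\, n(x)\Tmk_1)$ is the coset $x\in -1+p\BmZ_p$, of measure $p^{-1}$, and on that coset the integrand is (a constant times) $\chi(x)\overline\psi(xy)$. Writing $x=-1+z$ with $z\in p\BmZ_p$, the factor $\chi(1-z)$ is an \emph{additive} character of $p\BmZ_p/p^2\BmZ_p$ --- that is exactly what your isomorphism $(1+p\BmZ_p)/(1+p^2\BmZ_p)\cong\BmZ/p$ expresses --- not a multiplicative one. So the integrand is the product of two additive characters of $\BmZ/p$: it is constant when they align, i.e.\ when $\chi(1-z)=\psi(yz)$ for all $z\in p\BmZ_p$, and the integral vanishes otherwise. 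There is no square-root cancellation; the dichotomy is ``coherent of size $p^{-1}$'' versus $0$. Your support-measure claim is also off in any natural parametrisation.

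The factor $p^{1/2}$ instead emerges from the surrounding bookkeeping. One finds
\[
W(\am{y}\Tmk_1)=p^{-1/2}\,|y|^{1/2}\,\chi(y)\int_{-1+p\BmZ_p}\chi(x)\overline\psi(xy)\,dx,
\]
which at the matching $y\in p^{-2}\BmZ_p^\times$ has modulus $p^{-1/2}\cdot p\cdot p^{-1}=p^{-1/2}$; dividing by the normalisation $|W(\Tme)|=|\epsilon(1,\chi^{-1},\overline\psi)|=p^{-1}$ gives $|W_\circ|=p^{1/2}$. The only genuine Gauss sum in the calculation is the conductor-$p^2$ one hidden in $W(\Tme)$, not a mod-$p$ Gauss sum on the intermediate cell.
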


\subsection{Ideas of proof}\label{sec:intro:ideas}



Let $f\in S_2^*(N,\chi)$ be a primitive newform of level $N$.
For each cusp $\Fma$ of $\Gamma_0(N)\SB \FmH$ we can consider the Fourier expansion at $\Fma$ and integrate $f$ against closed horocycles. The number of inequivalent cusps of $\Gamma_0(N)\SB \FmH$ is 
$ 
  \sum\limits_{ab=N} \varphi( (a,b) )
$
where  $\varphi$ is Euler function and $(a,b)$ is the g.c.d of $a$ and $b$. If $N$ is square-free then there are $2^{\omega(N)}$ cusps where $\omega(N)$ is the number of prime factors of $N$.

Next we take into account the group of Atkin--Lehner involutions~\cite{AL70} because if a cusp $\Fma$ is conjugate to $i\infty$ by an Atkin--Lehner involution, then the Fourier expansions of $f$ at $\Fma$ and $i\infty$ are directly related. The cusp $\Fma$ does carry the same information as the cusp $i\infty$ so that the same asymptotics arise when investigating periods against closed horocycles at such cusps. The number of cusps is greater than the number $2^{\omega(N)}$ of Atkin--Lehner involutions if and only if $N$ is not square-free.
 
The conclusion so far is that one should investigate the case when $N$ is \emph{not} square-free and look at closed horocycles around cusps of $\Gamma_0(N)\SB \FmH$ which are \emph{not} conjugate to $i\infty$ by the group of Atkin--Lehner involutions. Fortunately this move to a somehow inextricable situation will actually succeed! 

At this stage we bring into play representation theory, viewing $f$ as generating a cuspidal automorphic representation $\pi\simeq \otimes_p \pi_p$.
Our key point is to work with the invariants $h(\pi_p)$ attached to the representations $\pi_p$.
Among other things it captures the local harmonic analysis underlying the solution of the problem. The idea should apply more generally to several kind of periods of automorphic forms as it primarily relies on factorization of period integrals in local functionals.

The method will be formalized in Section~\ref{sec:wh}. We start with a simple inequality in Lemma~\ref{lem:lower} which is a variant of Hecke classical bound~\cite{hecke-werke}*{Satz\,8~p.\,484}. This reduces lower bounds for $\pnorm{f}_\infty$ to lower bounds for $h(\pi_p)$. Then we are in position to apply the Proposition~\ref{prop:largew}. The global analysis above saying that $\Fma$ is not conjugate to $i\infty$ translates into the local fact that the newvector $W_\circ$ achieves its maximum at a matrix $g\in \GL(2,\Q_p)$ that is neither diagonal nor anti-diagonal.

   The value $h(\pi_p)=p^{\frac12}$ equals the fourth-root of the conductor $p^2$ of $\pi_p$, which explains the exponent $\frac14$ in Theorem~\ref{th:main}. It seems to be the largest among all representations of $\GL(2,\BmQ_p)$ in which case Theorem~\ref{th:main} would provide the best lower bound that can be attained by our method.

The proof of Theorem~\ref{th:hilbert} is based on the same ideas by including the uniformity in the parameters at infinity, see Section~\ref{sec:pf}. The introduction of the local invariant $h(\pi)$ allows to make precise the analogy between archimedean and non-archimedean places. 
Indeed we can explain the respective exponents $\frac14$ in the level aspect, $\frac16$ in the eigenvalue aspect~\cite{Sarn:Morawetz} (Maass forms), $\frac14$ in the weight aspect~\cite{Xia07} (holomorphic forms), by showing that $h(\pi)=N^{\frac14}$ for certain non-archimedean $\pi$ of conductor $N=p^2$, that $h(\pi) \asymp r^{\frac16}$ for $\pi$ an archimedean principal series of parameter $r$ and that $h(\pi) \asymp k^{\frac14}$ for $\pi$ a discrete series of weight $k$.

\section{Complements to the main results}\label{sec:complement}

We explore variants and corollaries as well as relation to other questions.
The goal is to revisit some previous estimates in light of our results.
The content of this section is summarized as follows:
\begin{enumerate}[(i)]
\item For a given square level $N$ and a primitive character $\chi$ (mod $N$) the Theorem~\ref{th:main} says that $\pnorm{f}_\infty \gg N^{\frac{1}{4}}$. Surprisingly there is a single point $z_\chi\in \FmH$ depending only on $\chi$ at which these large values are achieved. Namely we have
\[
\pnorm{f}_\infty \ge 
\abs{f(z_\chi)}\gg N^{\frac14}\]
for all $f\in S_2^*(N,\chi)$, see Theorem~\ref{th:point}.
\item For any $4<r\le \infty$ we have that $\pnorm{f}_r\to \infty$ as $N\to \infty$ (again assuming that $N$ is a square and $\chi$ is primitive, see Proposition~\ref{prop:neigh}).
\item We establish  upper and lower estimates for  Wilton partial sums $\sum\limits_{m\le M} a_m e(mx)$ where $x\in (0,1)$ and the Fourier coefficients $a_m$ are given by~\eqref{def:f}.
\end{enumerate}

\subsection{Mean value estimates.}\label{conj:mean} Let 
\begin{equation}\label{def:M} 
	M(z):=\sum_{g\ o.b.\,S_2(N,1)} \abs{g(z)}^2, \quad z\in \FmH
  \end{equation}
  where the sum is over an orthonormal basis of $S_2(N,1)$ for the Petersson inner-product. Note that $\dim S_2(N,1)=N^{1+o(1)}$ and by definition 
  \begin{equation}\label{mean-M} 
  \frac{1}{\Mvol(\Gamma_0(N)\SB \FmH)} \int_{\Gamma_0(N)\SB \FmH}
  M(z) \frac{dxdy}{y^2} = \frac{1}{4\pi} + O(N^{-1+o(1)}).
\end{equation}
It is shown by Jorgenson--Kramer~\cite{JK04} that 
\begin{equation} \label{JK}
  \pnorm{M}_\infty \ll 1.
\end{equation}
The estimate is quite robust since it applies more generally to finite coverings of a given non-compact Riemann surface.

It follows from~\eqref{JK} that the upper bound $\pnorm{f}_\infty \ll N^{\frac12+\epsilon}$ holds for all primitive forms $f\in S^*_2(N,1)$. Indeed $g:=\dfrac{f}{(f,f)^{\frac12}}$ is an orthonormal vector for the Petersson inner product and $(f,f)=N^{1+o(1)}$ by results of Iwaniec and Hoffstein--Lockhart.

Another deep estimate for $M(z)$ is in the work of Michel--Ullmo~\cite{MU98}*{Thm.\,1.6}: 
\begin{equation}\label{MU}
 \frac{1}{\Mvol(\Gamma_0(N)\SB \FmH)} \int_{\Gamma_0(N)\SB \FmH}
  M(z)^2 \frac{dxdy}{y^2} = \frac{1}{(4\pi)^2} + O(N^{-\delta}),
\end{equation}
for $N$ square-free and some $\delta>0$. 

The previously conjectured estimate~\eqref{folklore} is compatible with~\eqref{JK} and~\eqref{MU}, e.g. it would have followed from~\eqref{JK} \emph{if} the mass $\abs{f(z)}^2$ \emph{were} equally distributed among all primitive forms in $S^*_2(N,1)$, see~\cite{JK04}*{Rem.\,3.3}.

We now know from Theorem~\ref{th:main} that~\eqref{folklore} doesn't hold in general.
The conclusion one may draw  is that the values $f(z)$ for different $f\in S_2^*(N,\chi)$ fluctuate much more than one might originally expect. In fact we shall derive a precise statement by considering the mean value estimate~\eqref{JK} for general nebentypus. Thus for $\chi$ an even Dirichlet character, let
\begin{equation}\label{def:Mchi}
  M_\chi(z) := \sum_{g\ o.b.\ S_2(N,\chi)} \abs{g(z)}^2, \quad z\in \FmH.
\end{equation}
The sum is over an orthonormal basis of $S_2(N,\chi)$ and the asymptotic~\eqref{mean-M} still holds.
If $\chi=\Mun$, then $M_\chi(z)=M(z)$.
Consider also the average over characters 
\begin{equation}\label{def:MGamma1}
  M_{\Gamma_1(N)}(z):=\frac{2}{\phi(N)} \sum_{\chi\ (N)} M_\chi(z), \quad z\in \FmH.
\end{equation}
The number $\frac{\phi(N)}{2}$ of even Dirichlet characters modulo $N$ equals the index ${[\Gamma_0(N):\Gamma_1(N)]}$. 

The function $M_{\Gamma_1(N)}(z)$ (resp. $M(z)$) can be interpreted as the ratio of the Arakelov metric and the hyperbolic metric for $\Gamma_1(N)\SB \FmH$ (resp. $\Gamma_0(N)\SB \FmH$). It is also shown in~\cite{JK04} that $\pnorm{M_{\Gamma_1(N)}}_\infty \ll 1$ (their result applies to any finite cover of a given Riemann surface). This implies that for all $\epsilon>0$,
\begin{equation*}
  \frac{1}{\Mvol(\Gamma_1(N)\SB \FmH)} \sup\limits_{z\in \FmH} 
  \sum_{\chi\ (N)} \sum_{f\in S_2^*(N,\chi)}
  \abs{f(z)}^2 \ll_\epsilon N^\epsilon.
\end{equation*}

On the other hand we have seen in Theorem~\ref{th:main} that there are primitive forms $f\in S_2^*(N,\chi)$ of arbitrary large level such that $\pnorm{f}_\infty \gg N^{\frac14}$. This might be interpreted by saying that for certain $z\in \FmH$ the absolute value $\abs{f(z)}$ fluctuate much with $f$.

Now it is interesting to ask about the size of $\pnorm{M_\chi}_\infty$ itself, for $\chi$ primitive. Our first observation is that the proof in~\cite{JK04} that $\pnorm{M}_\infty \ll 1$ doesn't extend to $M_\chi$. Indeed the argument on p.1275 of~\cite{JK04} uses the positivity of the heat kernel $K(t;z,\gamma z)$ which for non-trivial nebentypus should be replaced by $\chi(\gamma) K(t;z,\gamma z)$, destroying the positivity. Our second observation is that the proof in~\cite{MU98} doesn't apply either, one reason is that it is restricted to square-free levels. 

In fact we have the following result which clarifies the situation because it shows that $\pnorm{M_\chi}_\infty$ can become surprisingly large for non-trivial nebentypus. 
\begin{proposition}\label{prop:Mchi}
  For all even primitive Dirichlet character $\chi$ modulo $N=p^2$ with $p$ prime, we have $\pnorm{M_\chi}_\infty \gg_{\epsilon} N^{\frac12-\epsilon}$.
\end{proposition}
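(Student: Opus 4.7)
My plan is to turn Theorem~\ref{th:point} into the desired lower bound for $M_\chi$ by summing the contributions of all primitive forms at the single distinguished point $z_\chi$. Two ingredients besides Theorem~\ref{th:point} will be needed: the dimension count for $S_2(\Gamma_0(p^2),\chi)$ and a uniform upper bound on the Petersson norms $(f,f)$.

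First I would note that since $\chi$ is primitive modulo $N=p^2$, there are no oldforms contributing to $S_2(N,\chi)$: an oldform would have to come from a newform of smaller level whose nebentypus is a character factoring through a proper divisor of $p^2$, which is impossible for $\chi$ primitive. Hence $S_2(N,\chi)=S_2^*(N,\chi)$, and multiplicity one makes the rescaled family $\{f/(f,f)^{1/2} : f\in S_2^*(N,\chi)\}$ an orthonormal basis. Therefore
\[
M_\chi(z)=\sum_{f\in S_2^*(N,\chi)} \frac{\abs{f(z)}^2}{(f,f)}.
\]

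Next I would apply Theorem~\ref{th:point}: there exists a single point $z_\chi\in \FmH$, depending only on $\chi$, at which $\abs{f(z_\chi)}\gg N^{1/4}$ for \emph{every} $f\in S_2^*(N,\chi)$ simultaneously. Evaluating the identity above at $z=z_\chi$ and inserting this lower bound gives
\[
M_\chi(z_\chi)\gg N^{1/2}\sum_{f\in S_2^*(N,\chi)} \frac{1}{(f,f)} \;\gg\; N^{1/2}\cdot\dim S_2^*(N,\chi)\cdot \bigl(\max_f (f,f)\bigr)^{-1}.
\]
I would then feed in two classical inputs. The bounds of Iwaniec and Hoffstein--Lockhart on $L(1,\mathrm{Ad}\,f)$ (which are insensitive to the nebentypus, since they apply to any primitive form) give $(f,f)\ll_\epsilon N^{1+\epsilon}$ uniformly over $f\in S_2^*(N,\chi)$. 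The Riemann--Roch/dimension formula for $S_2(\Gamma_0(p^2),\chi)$ with $\chi$ non-trivial yields $\dim S_2^*(N,\chi)=\dim S_2(\Gamma_0(p^2),\chi)\sim \frac{1}{12}[SL_2(\BmZ):\Gamma_0(p^2)]\asymp N$. Combining,
\[
\pnorm{M_\chi}_\infty \geq M_\chi(z_\chi) \gg_\epsilon N\cdot N^{1/2}\cdot N^{-1-\epsilon}=N^{1/2-\epsilon}.
\]

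The main obstacle is really that a naive use of the inequality $M_\chi(z)\geq \abs{f(z)}^2/(f,f)$ for a single $f$ gives only $M_\chi(z)\gg N^{-1/2}$, which is vacuous. The $N^{1/2-\epsilon}$ savings come from summing a family of size $\asymp N$, and this is only possible because Theorem~\ref{th:point} locates a \emph{common} peak point $z_\chi$ for all primitive forms. The non-trivial content of the proposition is therefore entirely imported from Theorem~\ref{th:point}; the argument beyond that is bookkeeping with the dimension formula and the standard bound on Petersson norms.
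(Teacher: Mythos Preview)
Your proof is correct and follows essentially the same approach as the paper's: evaluate $M_\chi$ at the common peak point $z_\chi$ from Theorem~\ref{th:point}, use $(f,f)\ll_\epsilon N^{1+\epsilon}$ from Hoffstein--Lockhart, and sum over the $\asymp N$ primitive forms. You have in fact made explicit two points the paper leaves implicit---that primitivity of $\chi$ forces $S_2(N,\chi)=S_2^*(N,\chi)$, and the dimension count $\dim S_2(N,\chi)\asymp N$---so your write-up is the more complete of the two.
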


Not only the individual forms may assume large values but also their entire average $M_\chi(z)$ in~\eqref{def:Mchi}! Thus the respective arguments in~\cite{MU98} and~\cite{JK04}*{p.1275} don't generalize to $M_\chi$ for a good reason.

\begin{proof}[Proof of Proposition~\ref{prop:Mchi}]
It follows from Theorem~\ref{th:point} below that 
$g(z_\chi) \gg N^{-\epsilon}$ where $g:=\frac{f}{(f,f)^{\frac12}}$ and $f\in S_2^*(N,\chi)$.
Since $z_\chi\in \FmH$ depends only on $\chi$ (and not on $g$), we have $M_\chi(z_\chi) \gg N^{\frac12-\epsilon}$ which concludes the proof.
\end{proof}

A comparison may also be drawn with~\eqref{def:MGamma1}: since $\pnorm{M_{\Gamma_1(N)}}_\infty \ll 1$, the absolute value $\abs{M_\chi(z)}$ fluctuate much with $\chi$ for certain $z\in \FmH$. This is consistent with~\eqref{def:zchi} below in which $z_\chi$ shall vary with $\chi$.

\subsection{A specific CM-point}
We refine the statement of Theorem~\ref{th:main} by finding a specific point $z\in \FmH$ such that $\abs{f(z)}\gg N^{\frac14}$. 
\begin{theorem}\label{th:point}
 Let $\chi$ be an even primitive Dirichlet character modulo $p^2$. There is a unique $b\in (\BmZ/p\BmZ)^\times$ such that $\chi(1-pz)=e^{2\pi i \frac{bz}{p}}$ for all $z\in \BmZ$. Let $a\in (\BmZ/p\BmZ)^\times$ be the multiplicative inverse: $ab\equiv 1 \pmod{p}$. 

For all primitive forms $f\in S_2^*(p^2,\chi)$, we have $\abs{f(z_\chi)}\gg p^{\frac12}$ where
 \begin{equation}\label{def:zchi}
   z_\chi:=\frac{a}{p}+\frac{i}{p^3} \in \FmH.
 \end{equation} 
\end{theorem}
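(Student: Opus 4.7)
The strategy is to trace through the Whittaker-model analysis underpinning Theorem~\ref{th:main} and identify explicitly the archimedean point at which $|f|$ attains its large value. We view Theorem~\ref{th:point} as a localization of the $L^\infty$-bound to the single point corresponding, via strong approximation, to the $\GL(2,\BmQ_p)$-maximizer of $|W_\circ|$ from Proposition~\ref{prop:largew}.

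The key explicit link will be the rational matrix
\[
\sigma = \begin{pmatrix} a & -1 \\ p & 0 \end{pmatrix} \in \GL(2,\BmQ), \qquad \det \sigma = p,
\]
which satisfies $\sigma(ip^2) = a/p + i/p^3 = z_\chi$. At the archimedean place, $\sigma$ carries the high imaginary point $ip^2$ (deep in the standard fundamental domain, near the cusp $i\infty$) to the point $z_\chi$, which sits on a small horocycle around the cusp $a/p$. At the $p$-adic place, the factorization
\[
\sigma = \begin{pmatrix} 1 & a/p \\ 0 & 1 \end{pmatrix}\begin{pmatrix} 0 & -1 \\ p & 0 \end{pmatrix}
\]
displays $\sigma_p$ as a Fricke-type involution at level $p$ twisted by an affine translation by $a/p$, and hence as neither diagonal nor anti-diagonal: this is exactly the kind of element of $\GL(2,\BmQ_p)$ identified in Proposition~\ref{prop:largew} as realizing the maximum $p^{\frac12}|W_\circ(\Tme)|$. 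Because $\sigma$ is rational, automorphy of the adelic lift $\phi_f$ lets us swap the $p$-adic action of $\sigma$ against the archimedean one through strong approximation.

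The Whittaker expansion $\phi_f(g)=\sum_{\alpha\in\BmQ^\times} W(\mdede{\alpha}{0}{0}{1}g)$ evaluated at a suitable adelic lift $g_\chi$ of $z_\chi$ then factors as a product of local Whittaker values. For an appropriate positive integer $\alpha$ (of size $\sim p^3$, so as to compensate the factor $\MIm(z_\chi)=p^{-3}$), the $p$-adic factor contributes the full $p^{\frac12}|W_p(\Tme)|$ by Proposition~\ref{prop:largew}, the archimedean factor is the holomorphic weight-$2$ Whittaker $W_\infty(\mdede{\alpha y}{0}{0}{1})=(\alpha y)e^{-2\pi \alpha y}$ evaluated at $\alpha y$ of constant order, and the remaining primes contribute spherical factors of order $1$. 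Isolating this main term and controlling the sum over the other $\alpha$ will yield $|f(z_\chi)| \gg p^{\frac12}$.

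The main obstacle will be pinning down the exact relation $a \equiv b^{-1}\pmod p$. It emerges from the stationary-phase computation of the local Gauss sum used to prove Proposition~\ref{prop:largew}: since $\chi(1-pz)=e^{2\pi i bz/p}$, the sum $\tau(\chi_p)=\sum_{t\in(\BmZ_p/p^2\BmZ_p)^\times}\chi(t)e^{2\pi it/p^2}$ concentrates at the stationary point $t\equiv b\pmod p$, and the multiplicative inverse $b^{-1}\equiv a\pmod p$ is precisely the parameter that must appear as the top-left entry of the maximizer $\sigma_p$ after dualizing between the additive Kirillov and the multiplicative Whittaker variables. Verifying that this $p$-adic entry matches the real part of $z_\chi$, and that the archimedean and $p$-adic contributions combine without spurious phase cancellation, is where the explicit Gauss-sum computations needed in the proof of Proposition~\ref{prop:largew} get used.
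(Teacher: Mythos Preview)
Your overall strategy matches the paper's: exploit the off-diagonal peak $p^{1/2}$ of the local newvector $W_{\circ p}$ in the factorized Whittaker expansion, and use automorphy by a rational matrix to relate the resulting adelic value to $f(z_\chi)$. But there is a genuine inconsistency in the middle paragraph that would cause the argument to fail as written.

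You say you will evaluate the Whittaker expansion at ``a suitable adelic lift $g_\chi$ of $z_\chi$'' and that the dominant term occurs at a positive integer $\alpha\sim p^3$, with the $p$-adic factor contributing $p^{1/2}$. These two claims are incompatible. If $g_\chi$ is the standard lift (archimedean component $\mdede{p^{-3}}{a/p}{0}{1}$, finite components trivial), then indeed the significant range is $\alpha\sim p^3$, but the $p$-adic factor $W_{\circ p}(\am{\alpha})$ is the \emph{diagonal} Kirillov value $\abs{\alpha}_p^{1/2}\Mun_{\BmZ_p}(\alpha)$, which never exceeds $1$; there is no $p^{1/2}$ peak and no single dominant term---you would be staring at the raw Fourier sum $p^{-3}\sum_{n\ge1}a_n n^{1/2}e(na/p)e^{-2\pi n/p^3}$ with $\sim p^3$ terms and no way to isolate one. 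If instead you first act by $\sigma^{-1}$ to push the $p$-adic component off the diagonal, then the archimedean component moves to $ip^2$ (as your own computation $\sigma(ip^2)=z_\chi$ shows), so the significant $\alpha$ has $\alpha p^2\asymp 1$, not $\alpha\sim p^3$.

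The paper avoids this by running the argument in the opposite direction. One \emph{starts} from the adelic element $g$ with $g_\infty=\Tme$, $g_p=\mdede{b/p}{0}{p}{1}$, and $g_v=1$ elsewhere. Since $\am{n}g_p=\am{bn/p^2}\Tmk_1$, Proposition~\ref{prop:offdiag} kills every term in the Whittaker expansion except those with $n\ge1$ and $n\equiv1\pmod p$; the term $n=1$ contributes $p^{1/2}e^{-2\pi}$ and all others are $O(e^{-2\pi p})$. Only \emph{afterwards} does one invoke strong approximation, decomposing $g_p=\gamma k$ with $k\in I_2$ and $\gamma=\mdede{1/p}{0}{ap}{1}\in\GL_2(\BmQ)$; this is where the relation $ab\equiv1\pmod p$ appears (as a congruence needed for $k\in I_2$, not via stationary phase), and one finds $\gamma_\infty^{-1}\cdot i=-1/(p^2 z_\chi)$, whence $\abs{\varphi(g)}=\abs{f(z_\chi)}$ via the Atkin--Lehner involution. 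Your matrix $\sigma$ can be made to play the same role, but you must first fix where you compute the Whittaker expansion and then track the archimedean height consistently.
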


\begin{remarks*}\begin{enumerate}[(i)] 
  \item When establishing $\pnorm{f}_\infty \ll N^{\frac13}$ for $N$ square-free in~\cites{BH09,Temp:sup,Temp:harcos,Temp:level-III}, the critical region is $N^{-1} \ll \Im(z)\ll N^{-\frac23}$. The lower bound $N^{-1} \ll \Im(z)$ follows by an application of Atkin--Lehner operators and the fact that $N$ is square-free. In comparison, $N^{-\frac32}=\MIm(z_\chi)$ in~\eqref{def:zchi} above is much smaller, where $N:=p^2$. 
  \item One has $\MIm(z_\chi)\ge \MIm(\gamma z_\chi)$ for all $\gamma \in \Gamma_0(p^2)$. Also if we let $z'_\chi:= \frac{-1}{p^2z_\chi}$, then
	$\frac{1}{p^3}>\MIm(\gamma z'_\chi) $. Actually it is possible to verify the following: 
\begin{equation*}
	  \frac{1}{p^7} \ll \frac{1}{p^3} - \max_{\gamma \in \Gamma_0(p^2)} \MIm(\gamma z'_\chi) \ll \frac{1}{p^5}.
	\end{equation*}
\item Since $z_\chi$ belongs to $\BmQ(i)$ it is a CM-point. It would be interesting to carry out the analysis of the local zeta integral at $p$ attached to Waldspurger formula for $L(\mdemi,f\times \theta)$, where $\theta$ is a Gr\"ossencharacter of finite order of $\Q(i)$ ramified at $p$.
\end{enumerate}
\end{remarks*}

The proof of Theorem~\ref{th:point} will be given in~\S\ref{sec:pf:point}. With more work we also establish lower bounds for other $L^r$-norms. If $2\le r \le \infty$, let
\begin{equation*}
  \pnorm{f}^r_r:= \frac{1}{\Mvol(\Gamma_0(N)\SB \FmH)}
  \int_{\Gamma_0(N)\SB \FmH} \abs{f}^{r} \frac{dxdy}{y^2}.
\end{equation*}
\begin{proposition}\label{prop:neigh}   
(i) For any fixed $0<\delta<\frac 12$ we have $\abs{f}\gg p^\delta$ on a neighborhood of $z_\chi$ of volume at least $p^{\frac12-\delta}$, with $z_\chi$ as in Theorem~\ref{th:point}.
 
(ii) Thus for all $r>4$, we have that
  $	\dfrac{\pnorm{f}_r}{\pnorm{f}_2} \to \infty	$
	as $p\to \infty$ uniformly for all $f\in S_2^*(p^2,\chi)$ and primitive character $\chi$ modulo $p^2$.
\end{proposition}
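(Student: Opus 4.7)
The proposition has two parts: (i) a localized lower bound, (ii) an $L^r$-norm bound. I would first deduce (ii) from (i), then work on (i).

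\emph{(ii) from (i).} Let $U\subset\FmH$ be the neighborhood produced by (i), shrunk if necessary so that it injects into $\Gamma_0(p^2)\SB\FmH$ (permissible since $z_\chi$ is not an elliptic point). Then
\[
\int_{\Gamma_0(p^2)\SB\FmH} \abs{f}^r\,d\mu \;\ge\; \int_U \abs{f}^r\,\frac{dx\,dy}{y^2} \;\gg\; p^{r\delta}\cdot p^{1/2-\delta}.
\]
Dividing by $\Mvol(\Gamma_0(p^2)\SB\FmH) = p^{2+o(1)}$ gives $\pnorm{f}_r^r \gg p^{(r-1)\delta - 3/2 + o(1)}$. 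For $r>4$ the interval $\bigl(\tfrac{3}{2(r-1)},\tfrac12\bigr)$ is non-empty, so choosing $\delta$ in it makes the exponent positive and forces $\pnorm{f}_r\to\infty$. Since $\pnorm{f}_2\asymp 1$ for primitive newforms of level $p^2$ (Iwaniec; Hoffstein--Lockhart), the ratio $\pnorm{f}_r/\pnorm{f}_2\to\infty$ as $p\to\infty$, uniformly in $\chi$ and $f\in S_2^*(p^2,\chi)$.

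\emph{Toward (i).} The starting point is Theorem~\ref{th:point}, which gives $\abs{f(z_\chi)}\gg p^{1/2}$ at the single point $z_\chi = a/p + i/p^3$. A baseline Lipschitz estimate follows from Deligne's bound on Fourier coefficients: differentiating $f(z) = y\sum_n a_n n^{1/2} e^{2\pi i n z}$ term by term yields $\abs{\nabla f(z)}\ll_\epsilon y^{-3/2-\epsilon}$, so $\abs{\nabla f}\ll p^{9/2+o(1)}$ at height $y_\chi=p^{-3}$. This alone gives $\abs{f}\gg p^{1/2}$ on a Euclidean ball of radius $\sim p^{-4}$ about $z_\chi$, of hyperbolic volume only $\sim p^{-2}$---far short of the claimed $p^{1/2-\delta}$. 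To reach the stated volume I would enlarge the set using the $\Gamma_0(p^2)$-invariance of $\abs{f}$. The parabolic translates $\gamma_k z_\chi = \sigma T^k\sigma^{-1}(z_\chi)$ with $\sigma=\mdede{a}{b}{p}{d}$ a scaling matrix at the cusp $a/p$ satisfy $\abs{f(\gamma_k z_\chi)} = \abs{f(z_\chi)}$, and a direct computation gives
\[
\gamma_k z_\chi \;=\; \frac{a}{p} - \frac{k}{p^2(k^2+p^2)} + \frac{i}{p(k^2+p^2)}.
\]
Each of these peaks carries a Lipschitz ball of hyperbolic volume $\sim 1/(k^2+p^2)$ on which $\abs{f}\gg p^{1/2}$; consecutive balls are close enough to overlap and can be glued into a connected open neighborhood of $z_\chi$ in $\FmH$.

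\emph{Main obstacle.} The parabolic orbit alone contributes only $\sum_k 1/(k^2+p^2)\sim 1/p$ in total hyperbolic volume, still short of $p^{1/2-\delta}$. Closing this gap is the crux of the argument: either one must bring in further $\Gamma_0(p^2)$-orbit elements beyond the parabolic stabilizer of $a/p$ and verify that their Lipschitz balls connect to the parabolic strip in a controlled way, or, probably more naturally, one revisits the Whittaker-side derivation of Theorem~\ref{th:point} itself. There $f(z_\chi)$ is expressed in terms of the newvector $W_\circ$ of $\pi_p$ evaluated at a matrix where $\abs{W_\circ}$ attains the maximum $p^{1/2}\abs{W_\circ(\Tme)}$ of Proposition~\ref{prop:largew}; by the right $K_0(p^2)$-invariance of $W_\circ$, this maximum is in fact attained on a whole coset, corresponding via the local-global correspondence to an archimedean region of $z$'s near $z_\chi$ whose hyperbolic volume one would compute directly and show is $\gg p^{1/2-\delta}$ for each $\delta<\tfrac12$.
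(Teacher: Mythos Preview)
Your deduction of (ii) from (i) is fine. The real issue is (i), where you correctly diagnose that Lipschitz bounds and parabolic orbits fall far short, but your closing suggestion misidentifies the mechanism. It is not the right $K_0(p^2)$-invariance of $W_\circ$ that produces a large archimedean region; that invariance is already built into the very definition of $z_\chi$ and cannot enlarge anything.

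The paper's argument is simply to let $g_\infty=\mdede{y}{x}{0}{1}$ vary in the adelic Whittaker expansion used for Theorem~\ref{th:point}. The crucial point---which your proposal never isolates---is that at the chosen $g_p$, Proposition~\ref{prop:offdiag} forces $W_{\circ p}\bigl(\am{n}g_p\bigr)=0$ unless $n\equiv 1\pmod p$. Thus the Fourier expansion reads
\[
\varphi(g)=yp^{1/2}\sum_{n\equiv 1\,(p)} a_n n^{1/2} e^{2\pi i n z},
\]
and the first term after $n=1$ is $n=p+1$. This $p$-gap in the support is what makes the $n=1$ term dominate not just at $y=1$ but throughout $p^{\delta-1/2}\lesssim y\le 1$: the tail is $O\bigl(p^{1/2+\epsilon}e^{-2\pi(p+1)y}\bigr)$, negligible against $e^{-2\pi y}$ once $y\gg p^{-1+\epsilon}$. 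Hence $\abs{\varphi(g)}\asymp p^{1/2}ye^{-2\pi y}\ge p^{\delta}$ on that strip, whose hyperbolic area $\int_{p^{\delta-1/2}}^{1}y^{-2}\,dy\asymp p^{1/2-\delta}$ is exactly what is claimed. Pulling back by $\gamma_\infty^{-1}$ preserves hyperbolic area and lands the strip near $z_\chi$.

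So the gap in your write-up is not a wrong idea but a missing one: the sparseness of the $n$-sum (only $n\equiv 1\pmod p$) coming from the explicit support of the $p$-adic Whittaker newvector off the diagonal. Once you see that, part (i) is a two-line computation rather than an orbit-packing problem.
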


\subsection{Square-root cancellation and uniform Wilton estimates}\label{conj:lindelof} Integral representations of special values of $L$-functions yield relations between the sup-norm of forms and the subconvexity problem. Let $\theta$ be a fixed dihedral $\GL(2)$ form induced from a Hecke character of finite order on a quadratic field. The Lindel\"of hypothesis states that $L(\mdemi,f\times \theta)\ll_{\epsilon,\theta} N^\epsilon$, which is of course very reliable because it would follow from the GRH for $L(s,f\times \theta)$. 

The conjectured estimate~\eqref{folklore} would have implied the Lindel\"of hypothesis for $L(\Mdemi,f\times \theta)$ as follows from the Waldspurger period formula, see~\cite{BH09} and~\cite{BM11}*{\S6}. 
Conversely the Lindel\"of hypothesis for $L(\Mdemi,f\times \theta)$  implies a bound $\abs{f(z)}\ll_\epsilon N^\epsilon$ for \emph{fixed} CM points $z\in \FmH$.
For sup-norm bounds in the eigenvalue aspect the relation to the subconvexity problem is discussed in~\cite{IS95}*{Rem.\,D} and~\cite{Sarn:schur}*{\S4}.

Similarly the Hecke integral gives yet another relation to the subconvexity problem. It is not difficult to verify that
  \begin{equation}\label{hecke} 
	\int_{1/N}^{1} f(z) \frac{dy}{y} = (2\pi)^{-1} L(\mdemi,f) + O(1).
  \end{equation}
 Thus~\eqref{folklore} would have implied the Lindel\"of hypothesis in the level aspect for $L(\Mdemi,f)$. 
Since we know that the estimate ~\eqref{folklore} doesn't hold in general and since we believe in the Lindel\"of hypothesis we see that the Hecke integral \eqref{hecke} has to carry a lot of cancellation.

We now turn to square-root cancellation heuristics based on Fourier expansion. 
	For $z=x+iy$, the tail of the Fourier expansion~\eqref{def:f} is negligible when $ny$ becomes large. Thus, setting $M:=1/y$, we have an approximation
  \begin{equation}\label{square-root} 
	f(x+iy) \approx M^{-\frac12} \sum_{n \sim M} a_n e^{2i\pi n x}.
 \end{equation}  
 The normalization~\eqref{def:f} is such that the Deligne bound reads  $\abs{a_n}\le \tau(n)$ for all $n\ge 1$. In many aspects the coefficients $a_n$ behave at random (cf. the Sato-Tate distribution of $a_p$, the sign changes, the bounds for $L$-values and character twists). A basic heuristic is to compare~\eqref{square-root} to a random trigonometric polynomial of degree $M$. With high probability the sup-norm over $x\in [0,1]$ of a random trigonometric polynomial is $\ll (M\log M)^{\frac12}$, see e.g.~\cite{BL01} and the references herein.

 For a \emph{fixed} form $f$, the right-hand side of~\eqref{square-root} is in fact $\ll_{f,\epsilon} M^{\epsilon}$ by the classical Wilton estimate. The estimate~\eqref{folklore} was equivalent to the stronger estimate $\ll_{\epsilon} (MN)^{\epsilon}$ that would be uniform in the level $N$ of $f$.

The following is a uniform version of the classical Wilton estimate~\eqref{square-root}.
\begin{proposition}
  Let $f\in S_2^*(N,\chi)$ be a primitive form with normalized coefficients $(a_m)_{m\ge 1}$ as in~\eqref{def:f}. For all integer $M \ge 1$ and $\epsilon>0$,
  \begin{equation*}
	\sum_{m\le M}^{} a_m e(mx) 
	\ll_\epsilon 
	M^{\frac12+\epsilon} \pnorm{f}_\infty.
  \end{equation*}
\end{proposition}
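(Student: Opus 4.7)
The plan is to recover the partial sum $S(x):=\sum_{m\le M}a_m e(mx)$ from values of the underlying holomorphic form on the horocycle at height $1/M$. Write $f(z)=yF(z)$ with $F(z):=\sum_{n\ge 1} a_n n^{1/2}e(nz)$, so that $\lvert F(x+iy)\rvert \le \pnorm{f}_\infty/y$ for all $z=x+iy\in\FmH$. First I would study the smoothed sum $\sum_m a_m w(m/M)e(mx)$ for a smooth test function $w$ compactly supported in $(0,\infty)$. Pairing the Fourier expansion of $F$ on the horocycle $\MIm(z)=1/M$ against a dual kernel via Parseval on $[0,1]$ yields
\begin{equation*}
\sum_m a_m w(m/M) e(mx) = \int_0^1 F(x+u+i/M)\,\psi(u)\,du,\qquad \psi(u):=\sum_{m\ge 1}\phi(m)e(-mu),
\end{equation*}
where $\phi(t):=t^{-1/2}e^{2\pi t/M}w(t/M)$.

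Next I would control $\psi$ in $L^1$ by Poisson summation. Writing $\phi(t)=M^{-1/2}\tilde w(t/M)$ with $\tilde w(s):=s^{-1/2}e^{2\pi s}w(s)$ smooth and compactly supported away from the origin, one has $\hat\phi(v)=M^{1/2}\hat{\tilde w}(Mv)$. Iterated integration by parts on $\hat{\tilde w}$ gives $\lvert\hat{\tilde w}(\xi)\rvert \ll_{w,A}(1+\lvert\xi\rvert)^{-A}$ for every $A$, so $\int_\BmR\lvert\hat\phi(v)\rvert\,dv\ll_w M^{-1/2}$, and Poisson summation yields $\pnorm{\psi}_{L^1([0,1])}\ll_w M^{-1/2}$. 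Combining with the trivial estimate $\pnorm{F(\cdot+i/M)}_{L^\infty([0,1])}\le M\pnorm{f}_\infty$ produces
\begin{equation*}
\left\lvert\sum_m a_m w(m/M)e(mx)\right\rvert \le \pnorm{F(\cdot+i/M)}_\infty\cdot\pnorm{\psi}_1 \ll_w M^{1/2}\pnorm{f}_\infty.
\end{equation*}
The exponent $\tfrac12$ emerges from the precise balance between a gain of $M^{-1/2}$ in $\psi$ and the loss of $M$ in passing from $f$ to $F$.

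Finally I would pass from the smooth to the sharp cutoff. Fix a smooth $\Phi:(0,\infty)\to[0,1]$ with $\Phi=1$ on $(0,1]$ and $\Phi=0$ on $[2,\infty)$, and decompose dyadically $\Phi(t)=\sum_{k\ge 0}W(2^k t)$ with $W$ smooth and supported in $[1/2,2]$. Applying the smooth bound at each scale $M_k:=M/2^k$ and summing the geometric series produces $\sum_m a_m\Phi(m/M)e(mx)\ll M^{1/2}\pnorm{f}_\infty$; the tail $\sum_{M<m\le 2M}a_m\Phi(m/M)e(mx)$ is itself a smooth sum of the form above at scale $M$, hence bounded by $\ll M^{1/2}\pnorm{f}_\infty$. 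Subtracting yields $S(x)\ll M^{1/2}\pnorm{f}_\infty$, well within the claimed $M^{1/2+\epsilon}\pnorm{f}_\infty$.

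I expect the main obstacle to be the decay of $\hat\phi$, which forces $\tilde w$ to be smooth with compact support bounded away from the origin and hence necessitates the dyadic reorganization rather than a single smoothing at scale $M$. I note that a naive Plancherel-plus-Cauchy--Schwarz argument (extracting $\sum_{m\le M}\lvert a_m\rvert^2\ll M\pnorm{f}_\infty^2$ from the horocyclic $L^2$-norm of $F$ and bounding $S$ by $\sqrt{M}\bigl(\sum\lvert a_m\rvert^2\bigr)^{1/2}$) produces only $\ll M\pnorm{f}_\infty$ because it discards all cancellation in the phases $e(mx)$; it is the Poisson-summation step above that promotes the Parseval information on the horocycle into an $L^\infty\!\cdot L^1$ estimate uniform in $x$.
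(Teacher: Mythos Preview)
Your argument is correct and in fact yields the slightly sharper bound $\ll M^{1/2}\pnorm{f}_\infty$ without the $\epsilon$-loss. The one step that deserves a word of justification is the claim that the tail $\sum_{m>M}a_m\Phi(m/M)e(mx)$ is a smooth sum of the required shape: this is because $\Phi$ is identically $1$ on $(0,1]$ and smooth, so all its derivatives vanish at $t=1$, and hence $\tilde w(t):=\Phi(t)\Mun_{t>1}$ is itself smooth and supported in $[1,2]$.

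Your route is genuinely different from the paper's. The paper works with a \emph{sharp} Dirichlet-type kernel $D_M(x+iy)=y^{\epsilon-1}\sum_{m\le M}m^\epsilon e(-mx)$ and integrates $f(\alpha+x+iy)D_M(x+iy)$ over the full horoball $\{x\in[0,1],\ y>0\}$; the $y$-integration against $y^{\epsilon-1}$ converges near $0$ only because of the $\epsilon$, and near $\infty$ by the cuspidal decay of $f$. This produces directly $\sum_{m\le M}a_m e(m\alpha)m^{-1/2}\ll_\epsilon M^\epsilon\pnorm{f}_\infty$, and a final Abel summation gives the stated bound. By contrast you stay on the single horocycle $y=1/M$, replace the Dirichlet kernel by a smooth one, and control its $L^1$-norm via Poisson summation rather than via the classical $L^1$-bound for Dirichlet kernels; the passage to the sharp cutoff is then handled by a dyadic partition of unity instead of partial summation. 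The trade-off: the paper's argument is shorter and needs no smoothing apparatus, but pays an $M^\epsilon$; your approach is a bit longer to set up but is lossless and makes transparent that the exponent $\tfrac12$ comes from the scaling $\pnorm{\hat\phi}_{L^1}\asymp M^{-1/2}$ against $\pnorm{F(\cdot+i/M)}_\infty\le M\pnorm{f}_\infty$.
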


\begin{proof}
  We consider the Dirichlet kernel
 \[
 D_M(x+iy):=y^{\epsilon-1}\sum_{m\le M} m^\epsilon e(-mx),
 \]
whose $L^1$-norm satisfies $\int_0^1 \abs{D_M(x+iy)} \ll y^{\epsilon-1} M^\epsilon$. Then we compute
\begin{equation*}
  \begin{aligned}
	\int_{0}^\infty \int_{0}^1
	f(\alpha+ x +iy) D_M(x+iy) dxdy
	&=
	\int_{0}^\infty y\sum_{m\le M}
	a_m e(m\alpha) m^{\frac12}
	e^{-2\pi my}
	(my)^{\epsilon}\frac{dy}{y}\\
	&=
	\sum_{m\le M}
	\frac{a_m e(m\alpha)}{m^{\frac12}}
	\int_{0}^\infty y^{1+\epsilon} e^{-2\pi y} \frac{dy}{y}.
  \end{aligned}
\end{equation*}
By the triangle inequality this implies
\begin{equation*}
  \begin{aligned}
	\sum_{m\le M}
	\frac{a_m e(m\alpha)}{m^{\frac12}}
	& \ll_\epsilon M^\epsilon \int_0^\infty
	\left( \sup_{\Mim z= y} \abs{f(z)} \right)
	y^{\epsilon-1} dy 
	\\
	& \ll_\epsilon M^\epsilon \pnorm{f}_\infty 
  \end{aligned}
  \end{equation*}
The desired estimate follows by integration by parts.
\end{proof}

Conversely the Theorem~\ref{th:main} yields the following lower bound for Wilton sums.
\begin{proposition}
  There exist forms $f\in S_2^*(N,\chi)$ of arbitrary large level $N$, and $M\ge 1$, $x\in [0,1]$ such that
  \begin{equation*}
    \sum_{m\le M} a_m e(mx) \gg_\epsilon M^{\frac12} N^{\frac14-\epsilon}.
  \end{equation*}
\end{proposition}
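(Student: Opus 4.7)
My plan is to invert the Abel-summation argument behind the previous (upper-bound) Wilton-type proposition: instead of testing $f$ against a Dirichlet kernel, I read off large partial sums directly from a single large value of $f$. Start with Theorem~\ref{th:point}: for $N=p^2$ with $p$ prime and $\chi$ any even primitive character modulo $p^2$, every $f\in S_2^*(p^2,\chi)$ satisfies $\abs{f(z_\chi)}\gg p^{1/2}=N^{1/4}$ at the CM-point $z_\chi=\tfrac{a}{p}+\tfrac{i}{p^3}$. Set $x:=\tfrac{a}{p}\in[0,1]$ and $y:=1/p^3$.

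Inserting the Fourier expansion~\eqref{def:f} at $z_\chi$, one has
\begin{equation*}
  \frac{f(z_\chi)}{y}\;=\;\sum_{n=1}^{\infty} a_n e(nx)\,w(n),\qquad w(T):=T^{1/2}e^{-2\pi Ty}.
\end{equation*}
Writing $S(T):=\sum_{m\le T}a_m e(mx)$, Abel summation (valid since $w(T)\to 0$ rapidly and $S(T)=o(T^{1/2+\epsilon})$ by the classical Wilton estimate for the fixed cusp form $f$) converts this into the identity $\sum_n a_ne(nx)w(n)=-\int_0^{\infty}S(T)w'(T)\,dT$. Setting $B:=\sup_{T\ge 1}\abs{S(T)}T^{-1/2}<\infty$, we obtain
\begin{equation*}
  \abs{f(z_\chi)}\;\le\; y\cdot B\int_0^{\infty}T^{1/2}\abs{w'(T)}\,dT.
\end{equation*}
A direct computation (split $w'(T)=\tfrac{1}{2}T^{-1/2}e^{-2\pi Ty}-2\pi yT^{1/2}e^{-2\pi Ty}$ and substitute $u=2\pi Ty$) shows the last integral is at most $\tfrac{3}{4\pi y}$, so $\abs{f(z_\chi)}\le\tfrac{3B}{4\pi}$.

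Combining this with the lower bound from Theorem~\ref{th:point} yields $B\gg N^{1/4}$, and by definition of $B$ there exists some integer $M\ge 1$ with $\abs{S(M)}\gg M^{1/2}N^{1/4}$, which is in fact slightly stronger than the stated claim (the $\epsilon$-loss is not needed). The only point requiring attention is verifying that the $L^1$-norm of $T^{1/2}\abs{w'(T)}$ is $O(1/y)$ with an \emph{absolute} constant independent of $y$ and $N$, which the change of variables above handles cleanly. Note also that $w$ is localized near $T\asymp 1/y=p^3$, so the $M$ produced by this argument is of order $N^{3/2}$, matching the truncation depth of the Fourier series at $z_\chi$.
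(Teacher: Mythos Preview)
Your argument is correct and is essentially the paper's own proof: both deduce the lower bound on partial sums from a single large value of $f$ via Abel/partial summation, bounding $f$ pointwise by the supremum of $|S(T)|$ against an explicit weight. Your version is marginally sharper because you normalize by $T^{1/2}$ rather than $T^{1/2+\epsilon}$ and evaluate $\int_0^\infty T^{1/2}|w'(T)|\,dT$ exactly, so the factor $y^{-\epsilon}$ never appears and you obtain $\gg M^{1/2}N^{1/4}$ without the $\epsilon$-loss; the assertion $B<\infty$ is indeed justified by the classical Wilton bound $S(T)\ll_f T^{1/2}$ for a fixed holomorphic form.
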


\begin{remark*}
  Thus we have constructed arithmetic sequences $m\mapsto a_m e(mx)$ whose partial sums do not satisfy square-root cancellation by a large $N^{\frac14}$ margin. This is despite the fact that the sequence is random in many aspects, which is reminiscent of a similar phenomenon for $m\mapsto a_m e(-2\sqrt{m})$ observed by Iwaniec--Luo--Sarnak~\cite{ILS}*{App.\,C}.	
\end{remark*}

\begin{example*}
The proof below shows that one can choose $M\le y^{-1}$ if $f(x+iy)\gg N^{\frac14}$. From Theorem~\ref{th:point} we see that for a level $N=p^2$ and $\chi$ primitive the bound holds with $x=\frac{a}{p}$ and some integer $M\le p^3$. It would be interesting to have a direct approach in this particular case. 
\end{example*}

\begin{proof} Let $\alpha>0$ be such that $\abs{S(M)}\le \alpha M^{\frac12+\epsilon}$ for all integer $M\ge 1$, where
\begin{equation*}
  S(M):= \sum_{m\le M} a_m e(mx).
\end{equation*}

Using partial summation we find that 
\begin{equation*}
  \begin{aligned}
	f(x+iy) &= y \sum_{n=1}^{\infty}
	\left( S(n)-S(n-1) \right) n^{\frac12}e^{-2\pi ny}\\
	&\ll y \sum_{n=1}^{\infty} \abs{S(n)} n^{-\frac12}e^{-2\pi ny} \ll_\epsilon \alpha y^{-\epsilon}.
  \end{aligned}
\end{equation*}
Let $f\in S_2^*(N,\chi)$ be such that $\pnorm{f}_\infty \gg N^{\frac14}$ (Theorem~\ref{th:main}). Thus $\alpha \gg N^{\frac14}y^\epsilon$ and the claim follows.
\end{proof}

\section{Whittaker periods and special forms}
\label{sec:wh}

\subsection{A local invariant}\label{sec:wh:h-inv}

We introduce the following normalized local invariant attached to generic representations.
\begin{definition}\label{def:h} For a unitary generic representation $\pi$ of $\GL(2,\BmQ_p)$, let
\begin{equation*} 
  h(\pi):= \max_{g\in \GL(2,\BmQ_p)} \abs{W_\circ(g)},
\end{equation*}
 where $W_\circ$ is the newvector in the Whittaker model, normalized by $W_\circ(\Tme)=1$.
\end{definition}
	The definition is licit because we know~\cite{Cass73} that a nonzero newvector in the Whittaker model doesn't vanish at the identity.
 
\begin{example*}   
 If $\pi$ is unramified (that is $p^{c(\pi)}=1$), then $h(\pi)=1$. The invariant $h(\pi)$ is a measure of the ramification of $\pi$ in the sense that it behaves somehow similarly to the conductor $p^{c(\pi)}$. (They are different because $h(\pi)=1$ if $p^{c(\pi)}=p$).
\end{example*}

\subsection{A lower bound}
The introduction of the invariant $h(\pi)$ is motivated by the following estimate.
\begin{lemma}\label{lem:lower}
	Let $f\in S^*_2(N,\chi)$ be a primitive form and for all primes $p$, let $\pi_p$ be its local component at $p$. The following holds:
	\begin{equation*} 
	 \pnorm{f}_\infty \ge (2\pi e)^{-1} \prod_{p} h(\pi_p) .
	\end{equation*}
\end{lemma}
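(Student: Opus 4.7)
The plan is to work adelically. Let $\phi_f$ be the adelic lift of $f$ to $\GL(2,\BmA)$, so that $\abs{\phi_f}$ is left $\GL(2,\BmQ)$-invariant and right invariant under $Z(\BmA) \cdot SO(2) \cdot K_0(N)_f$ (the nebentypus character $\chi$ has modulus one, so $\abs{\phi_f}$ is right-$K_0(N)$-invariant). The weight-two convention gives $\abs{\phi_f(\mdede{y}{x}{0}{1})} = \abs{f(x+iy)}$, and strong approximation for $\GL(2)$ identifies $\GL(2,\BmQ) \SB \GL(2,\BmA) / Z(\BmA) K_\infty K_0(N)_f$ with $\Gamma_0(N) \SB \FmH$. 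Consequently
\[
\pnorm{f}_\infty = \sup_{g \in \GL(2,\BmA)} \abs{\phi_f(g)},
\]
and it suffices to bound the right-hand side from below.

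Next, I would invoke the Whittaker expansion
\[
\phi_f(g) = \sum_{\alpha \in \BmQ^\times} W_f(a(\alpha) g), \qquad a(\alpha) := \mdede{\alpha}{0}{0}{1},
\]
where $W_f$ is a pure tensor $W_\infty \otimes \bigotimes_p W_{\circ,p}$: each $W_{\circ,p}$ is the local newvector normalized by $W_{\circ,p}(\Tme)=1$ as in Definition~\ref{def:h}, and $W_\infty(a(y)) = y\, e^{-2\pi y}$ for $y>0$ is the standard holomorphic weight-two Whittaker function. Writing $n(u) := \mdede{1}{u}{0}{1}$ and using $W_f(a(\alpha) n(u) g) = \psi(\alpha u) W_f(a(\alpha) g)$ together with the fact that $\BmQ \SB \BmA$ has Tamagawa volume one, Parseval on the compact abelian group $\BmQ \SB \BmA$ gives
\[
\int_{\BmQ \SB \BmA} \abs{\phi_f(n(u) g)}^2\, du = \sum_{\alpha \in \BmQ^\times} \abs{W_f(a(\alpha) g)}^2 \ge \abs{W_f(g)}^2,
\]
the last step by retaining only the $\alpha = 1$ term. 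Hence $\sup_u \abs{\phi_f(n(u) g)} \ge \abs{W_f(g)}$, and taking the supremum over $g$ on both sides yields
\[
\pnorm{f}_\infty \ge \sup_{g \in \GL(2,\BmA)} \abs{W_f(g)}.
\]

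Finally, since $W_f$ factorizes as a pure tensor, each local factor may be optimized independently:
\[
\sup_{g} \abs{W_f(g)} = \sup_{g_\infty} \abs{W_\infty(g_\infty)} \cdot \prod_p h(\pi_p).
\]
By the Iwasawa decomposition and right-$SO(2)$-equivariance of $W_\infty$, the archimedean supremum reduces to the diagonal torus, where $W_\infty(a(y)) = y e^{-2\pi y}$ attains its maximum $(2\pi e)^{-1}$ at $y = 1/(2\pi)$. Combining gives the claimed bound. The only point requiring care is the bookkeeping of normalizations, ensuring that the classical convention $a_1 = 1$ in~\eqref{def:f} is compatible with $W_{\circ,p}(\Tme)=1$ at every finite place and with the stated form of $W_\infty$, so that the tensor decomposition of $W_f$ carries no extra constant; once this is checked, the Parseval step above closes the argument.
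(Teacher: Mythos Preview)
Your proof is correct and follows essentially the same route as the paper: pass to the adelic form, bound $\pnorm{f}_\infty$ below by the supremum of the global Whittaker function, factor, and optimize the archimedean piece. The only cosmetic difference is that you invoke Parseval on the Fourier expansion to obtain $\sup_u \abs{\phi_f(n(u)g)} \ge \abs{W_f(g)}$, whereas the paper uses the more direct observation that $W(g)$ is by definition an average of $\varphi(n(x)g)\overline{\psi(x)}$ over the probability space $\BmQ\SB\BmA$, so the triangle inequality alone gives $\abs{W(g)} \le \pnorm{\varphi}_\infty$; both arguments yield the same inequality with the same constant.
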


\begin{proof} The proof is better achieved in the adelic framework. Thus we work with the corresponding automorphic form $\varphi$ on $\GL_2(\BmQ)\SB \GL_2(\BmA)$. The image (inside $\BmR_+$) of $z\mapsto \abs{f(z)}$ is identical to the image of $g\mapsto \abs{\varphi(g)}$, thus $\pnorm{f}_\infty=\pnorm{\varphi}_\infty$. Let $\psi$ be the standard additive character of $\BmQ\SB \BmA$ which is unramified at all finite places and let $dx$ be the self-dual Haar measure on $\BmA$. Let
  \begin{equation}\label{def:W}
		W(g):=\int_{\BmQ\SB \BmA} \varphi(n(x)g)
  \overline{\psi(x)} dx,
  \quad g\in G(\BmA)
	\end{equation}
	be the corresponding Whittaker function. Since $f$ is primitive, we find that $W(\Tme)=e^{-2\pi}$.

	The Whittaker function factors into $W=W_\infty \prod_p W_p$. Here for all prime $p$, $W_p(g)$ is a newvector in the Whittaker model $\CmW(\pi_p,\psi_p)$ and $W_\infty \in \CmW(\pi_\infty,\psi_\infty)$ is proportional to the lowest weight vector. We see that we may arrange so that
\[
 W_\infty(\am{y}) =  ye^{-2\pi y},\ \text{and}\ W_p(\Tme) = 1.
\]
Thus $W_p$ is the normalized newvector denoted $W_\circ$ in the previous section.

Since $\BmQ \SB \BmA$ has volume one, we have the inequality
	\begin{equation*}
\begin{aligned}
		\pnorm{f}_\infty &\ge \sup_{g\in G(\BmA)} \abs{W(g)} \\
&= \sup_{g\in G(\BmR)} \abs{W_\infty(g)} \prod_p \sup_{g\in G(\BmQ_p)} \abs{W_p(g)}.
\end{aligned}
	\end{equation*}
 The claim now follows from Definition~\ref{def:h} and an elementary calculation at infinity which shows that the maximum is attained at $y=(2\pi)^{-1}$.
\end{proof}

The proof of Lemma~\ref{lem:lower} is similar to that of Hecke bound. In fact we can recover Hecke bound since the coefficients of $f$ satisfy the inequality
 \[ \abs{a_n} \le n^{\frac12} \prod_{p|n} h(\pi_p), \quad n\ge 1.
 \]

\subsection{Modular forms with prescribed ramification}
\label{sec:wh:pl}

\begin{lemma}\label{lem:ps}
  Let $\pi$ be an irreducible admissible representation of $\GL(n,\BmQ_p)$ whose central character $\omega_\pi$ has the same conductor. Then $\pi$ is a twist-minimal principal series representation, that is: $\pi$ of the form $\chi_1\boxplus \chi_2 \cdots \boxplus \chi_n$ with $\chi_i$ unramified for all $1\le i\le n-1$.
\end{lemma}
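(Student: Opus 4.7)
The plan is to pass to the Galois side via local Langlands and invoke additivity of the Artin conductor. First, decompose the Weil--Deligne parameter of $\pi$ into indecomposable pieces,
\[
\rho_\pi = \bigoplus_{i=1}^r \rho_{\sigma_i}\otimes \mathrm{Sp}_{k_i},
\]
where each $\rho_{\sigma_i}$ is the irreducible Weil representation attached to a supercuspidal $\sigma_i$ of $\GL(d_i,\BmQ_p)$ and $\mathrm{Sp}_{k_i}$ denotes the $k_i$-dimensional special representation, with $\sum_{i=1}^r d_i k_i = n$. The $i$th summand is the parameter of an essentially square-integrable representation $\pi_i$ of $\GL(n_i,\BmQ_p)$, $n_i=d_ik_i$, and $\pi$ has Langlands data $(\pi_1,\ldots,\pi_r)$. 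Two standard identities follow: taking determinants gives $\omega_\pi = \prod_i \omega_{\pi_i}$, and Henniart's theorem $c(\pi)=a(\rho_\pi)$ combined with additivity of the Artin conductor under direct sums yields $c(\pi) = \sum_i c(\pi_i)$.

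The crux is then the local inequality $c(\pi_i)\ge c(\omega_{\pi_i})$, strict whenever $n_i\ge 2$. For $n_i=1$ this is trivial ($\pi_i=\omega_{\pi_i}$). For $n_i\ge 2$, I would write $\rho=\rho_{\sigma_i}\otimes\mathrm{Sp}_{k_i}$ and exploit the Artin conductor decomposition $a(\rho)=\operatorname{codim}(\rho^{I_p})+\mathrm{sw}(\rho)$ together with the elementary bound $\mathrm{sw}(\det\rho)\le \mathrm{sw}(\rho)$. When $d_i\ge 2$, the Weil representation $\rho_{\sigma_i}$ is necessarily ramified (otherwise it would factor through $W_{\BmQ_p}/I_p\simeq\BmZ$ and hence decompose as a sum of characters, contradicting its irreducibility), so $\rho^{I_p}=0$, $a(\rho)=n_i+\mathrm{sw}(\rho)$, and $a(\det\rho)\le 1+\mathrm{sw}(\rho)$, yielding the strict bound $c(\pi_i)-c(\omega_{\pi_i})\ge n_i-1\ge 1$. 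When $d_i=1$ and $k_i\ge 2$ the summand is a twisted generalized Steinberg, and a short case check on whether the character $\sigma_i$ is ramified or unramified gives the same bound.

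To conclude I chain the two inputs with the standard subadditivity of character conductors, $c(\omega_\pi)= c(\prod_i\omega_{\pi_i})\le \max_i c(\omega_{\pi_i})\le\sum_i c(\omega_{\pi_i})$, to obtain
\[
c(\pi) = \sum_i c(\pi_i)\;\ge\;\sum_i c(\omega_{\pi_i})\;\ge\;\max_i c(\omega_{\pi_i})\;\ge\; c(\omega_\pi).
\]
The hypothesis $c(\pi)=c(\omega_\pi)$ forces equality at every step: the first forces $n_i=1$ for each $i$ (so $r=n$ and each $\pi_i$ is a character $\chi_i$), and the second forces all but at most one of the $\chi_i$ to be unramified. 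Reindexing so that $\chi_1,\ldots,\chi_{n-1}$ are unramified yields the stated form. The principal obstacle I foresee is the case check in the Steinberg sub-case, where one must juggle the monodromy contribution to $\rho^{I_p}$ against the Swan conductor of the determinant; once that is settled, the remaining deduction is a purely formal squeeze of the inequality chain.
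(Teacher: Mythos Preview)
Your argument is correct. Both you and the paper pass to the Weil--Deligne parameter and exploit the Artin conductor, but the organizing principle differs. The paper's main proof uses the upper-numbering integral formula
\[
c(\pi)=\Mcodim\bigl(V^I\bigr)^{N=0}+\int_0^\infty \Mcodim V^{I^u}\,du
\]
directly: equality with $c(\det V)$ forces $\Mcodim V^{I^u}=\Mcodim(\det V)^{I^u}\in\{0,1\}$ for every $u$, which immediately pins down the shape of $V$. Your route instead decomposes $\rho_\pi$ into indecomposable summands, proves the strict inequality $c(\pi_i)>c(\omega_{\pi_i})$ block by block when $n_i\ge 2$, and then squeezes the additive chain. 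This is closer in spirit to the paper's first \emph{alternative} proof (given only for $n=2$, by case analysis on Steinberg versus supercuspidal), but you carry it out uniformly for all $n$. The paper's filtration argument is shorter and avoids the Steinberg case split you flag as the main obstacle; your argument, on the other hand, makes the quantitative gap $c(\pi_i)-c(\omega_{\pi_i})\ge n_i-1$ explicit, which is a pleasant byproduct. The Steinberg sub-case you worry about is routine once you track the monodromy contribution $\Mcodim(\rho^{I_p})^{N=0}=k_i-1$ in the unramified-twist case and $\rho^{I_p}=0$ in the ramified-twist case.
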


\begin{proof}
   Let $(V,N)$ be the representation of the Weil-Deligne group attached to $\pi$ by the local Langlands correspondence. We want to show that $N=0$ and $V$ is a direct sum of one-dimensional characters.

  Let $I$ be the inertia group of $\BmQ_p$ and $(I^u)_{u\ge 0}$ be the upper numbering filtration. Then the conductor of $\pi$ is equal to the Artin conductor of $(V,N)$, thus~\cite{book:serr68}*{Chap.\,VI}:
\[
c(\pi)=\log_p f(V,N) = \Mcodim \left( V^I \right)^{N=0} + \int_0^\infty \Mcodim V^{I^u} du. 
\]
The corresponding formula also holds for $c(\omega_\pi)=\log_p f(\det V)$. By assumption $c(\pi)=c(\omega_\pi)$, which implies $N=0$ on $V^I$ and
\[
\Mcodim V^{I^u} = \Mcodim \left( \det V \right)^{I^u} \qtext{for all $u\ge 0$.}
\]
Since $\det V$ is one-dimensional, we deduce that $\Mcodim V^{I}\in \set{0,1}$ from which the claim follows.
\end{proof}

\begin{corollary}\label{cor:newps}
  Let $\chi$ be an even primitive Dirichlet character of conductor $N$. Then all primitive forms in $S_2^*(N,\chi)$ are twist-minimal. Moreover the components at any prime $p$ are twist-minimal principal series. The $p$-th Fourier coefficients in the expansion~\eqref{def:f} are units: $\abs{a(p)}=1$.
\end{corollary}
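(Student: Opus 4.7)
The plan is to view $f$ as a newform generating a cuspidal automorphic representation $\pi = \otimes_v \pi_v$ of $\GL(2,\BmA)$, match the level factorization $N = \prod_p p^{c(\pi_p)}$ against the conductor factorization of $\chi = \omega_\pi$, and then invoke Lemma~\ref{lem:ps} place by place.

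The key input is the general conductor inequality $c(\omega_{\pi_p}) \le c(\pi_p)$ at every prime $p$. I would verify this using the formula recalled in the proof of Lemma~\ref{lem:ps}: with $(V,N)$ the Langlands parameter of $\pi_p$, if the higher inertia subgroup $I^u$ acts trivially on $V$ then it certainly acts trivially on $\det V$, so $\Mcodim V^{I^u} \ge \Mcodim (\det V)^{I^u}$ for every $u \ge 0$, and integrating against $du$ (together with the obvious contribution from the monodromy piece) yields $c(\pi_p) \ge c(\omega_{\pi_p})$. Since the global conductors of $\pi$ and of $\omega_\pi = \chi$ are both $N$, the sum $\sum_p (c(\pi_p) - c(\omega_{\pi_p})) \log p$ vanishes, and each summand being nonnegative forces $c(\omega_{\pi_p}) = c(\pi_p)$ at every prime $p$. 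Lemma~\ref{lem:ps} then gives $\pi_p \simeq \chi_1^{(p)} \boxplus \chi_2^{(p)}$ with $\chi_1^{(p)}$ unramified.

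For twist-minimality, given a Dirichlet character $\eta$ with local factors $\eta_p$, I would look at the local twist $\pi_p \otimes \eta_p \simeq (\chi_1^{(p)}\eta_p) \boxplus (\chi_2^{(p)}\eta_p)$, of conductor $c(\eta_p) + c(\chi_2^{(p)} \eta_p)$. Writing $\chi_2^{(p)} = (\chi_2^{(p)}\eta_p) \cdot \eta_p^{-1}$ and using $c(\alpha\beta) \le \max(c(\alpha), c(\beta))$ for characters of $\BmQ_p^\times$ gives
\[
c(\pi_p) \;=\; c(\chi_2^{(p)}) \;\le\; c(\chi_2^{(p)}\eta_p) + c(\eta_p) \;=\; c(\pi_p \otimes \eta_p),
\]
and taking the product over $p$ yields that $f \otimes \eta$ has level at least $N$. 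For the Fourier coefficient, $\chi_1^{(p)}$ is an unramified unitary character and $\chi_2^{(p)}$ is ramified, so the local $L$-factor is $L(s,\pi_p) = (1 - \chi_1^{(p)}(p) p^{-s})^{-1}$; reading off the coefficient in the analytic normalization \eqref{def:f} gives $a(p) = \chi_1^{(p)}(p)$, which has absolute value $1$.

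The main obstacle is the conductor inequality $c(\omega_{\pi_p}) \le c(\pi_p)$: it is standard, but requires the Artin conductor formula for Weil--Deligne representations rather than just the definition of conductor for a single character. Once this is granted, everything else reduces mechanically to Lemma~\ref{lem:ps} and the classical description of newvectors for principal series of $\GL(2,\BmQ_p)$.
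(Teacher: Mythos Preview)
Your argument is correct and is exactly the approach the paper has in mind: the paper simply says the corollary ``immediately follows from Lemma~\ref{lem:ps} with $n=2$,'' and you have spelled out the implicit local--global conductor matching (the inequality $c(\omega_{\pi_p})\le c(\pi_p)$ together with $\prod_p p^{c(\pi_p)}=N=\prod_p p^{c(\omega_{\pi_p})}$) that makes this work, as well as the deduction of twist-minimality and $\lvert a(p)\rvert=1$. The one point you assert without comment is that $\chi_1^{(p)}$ is unitary; this is immediate here since a complementary series has $\chi_1\chi_2^{-1}$ unramified, which is impossible when exactly one of $\chi_1,\chi_2$ is ramified.
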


\begin{remarks*}\begin{enumerate}[(i)] 
  \item A direct proof of the last assertion that $\abs{a(p)}=1$ may be found in~\cite{book:Miy}*{Prop.\,4.6.17}. 
  \item The condition that $\chi$ be \emph{primitive} is essential. For example if $N$ is square-free and $\chi=\Mun$ then the components at any prime $p\mid N$ are Steinberg. Thus there are local conditions on $N$ and on the conductor of $\chi$ that need to be satisfied.  
 \item Conversely if the desired local conditions are satisfied (e.g. $N\mid f(\chi)^2$ would be sufficient), then a positive proportion of forms in $S_2^*(N,\chi)$ have a component at every prime that is a principal series representation; this follows from~\cites{Weinstein09,Shin10}.
\end{enumerate}
\end{remarks*}

The Corollary~\ref{cor:newps} immediately follows from Lemma~\ref{lem:ps} with $n=2$.
We provide below two alternative proofs which are of independent interest. 
 It is instructive to see how each argument naturally points towards the same conclusion that $\pi$ is a principal series. 

\begin{proof}[Alternative proof of Lemma~\ref{lem:ps} when $n=2$.] 
We first prove that $\pi$ cannot be a twist $\eta\MSt$ of a Steinberg representation. Indeed if $\eta$ were unramified then $c(\eta\MSt)=1$, a contradiction. If $\eta$ were ramified, $c(\eta\MSt)=2c(\eta)$ which is strictly larger than $c(\omega_\pi)= c(\eta^2)$, again a contradiction.

We next prove that $\pi$ cannot be a dihedral supercuspidal representation. Indeed otherwise $c(\pi)\ge 2$ and $\pi$ would be induced from a quasi-character $\eta$ of a quadratic extension $E$. Its central character $\omega_\pi$ would be equal to $\eta|_{\BmQ_p^\times}\chi_E$ where $\chi_E$ is the quadratic character attached to $E$. 
If $E$ were unramified then $c(\pi)=2c(\eta)$ which is strictly larger than $c(\omega_\pi)=c(\eta|_{\BmQ_p^\times})$, a contradiction.
If $E$ were tamely ramified then $c(\pi)=c(\eta)+1$ which is strictly larger than $c(\eta|_{\BmQ_p^\times})\ge c(\omega_\pi)$, again a contradiction.
If $E$ were widely ramified then $p=2$ and $c(\pi)=c(\eta)+2$ which is strictly larger than $c(\eta|_{\BmQ_p^\times}\chi_E)$, again a contradiction. The case of non-dihedral supercuspidal representations (when $p=2$) follows from~\cite{Tunnell:llc}.

Thus $\pi$ is a principal series representation $\chi_1\boxplus \chi_2$. We have $c(\pi)=c(\chi_1)+c(\chi_2)$. On the other hand $\omega_\pi=\chi_1\chi_2$ which implies $c(\omega_\pi) \le \max(c(\chi_1),c(\chi_2))$ and thus $c(\chi_1)$ or $c(\chi_2)$ is equal to zero. The claim follows.
\end{proof}

\begin{proof}[Second alternative proof of Lemma~\ref{lem:ps} when $n=2$.]
Let $r=c(\pi)=c(\omega_\pi)$. Recall~\cite{Cass73} that the newvector in the representation of $\pi$ is stabilized by the congruence subgroup $I_r$ consisting of matrices $\mdede{a}{b}{c}{d}$ with $p^r|c$ and transforms via the character $\mdede{a}{b}{c}{d}\mapsto \omega_\pi(d)$.\footnote{There is a typo in~\cite{Cass73}*{Eq.\,(1.3)} where \Lquote{$a'$} should read \Lquote{$d'$}.}

Let $\tau$ be the representation of $\GL(2,\BmZ_p)$ induced from this character of $I_r$. 
By assumption $c(\omega_\pi)=r$, which implies that $\tau$ is irreducible.
By Frobenius reciprocity the restriction $\pi|_{\GL(2,\BmZ_p)}$ contains $\tau$.

  By a result of Henniart~\cite{Henn02:type} the representation $\tau$ is a type for the Bernstein component of twist-minimal principal series with central character equal to $\omega_\pi$ on $\BmZ_p^\times$.
  (Since $\omega_\pi$ is of conductor $p^r$ we are away from the exceptional cases in~\cite{Henn02:type}*{\S\,A.1.5} where a Bernstein component does not possess a type).
  This implies that $\pi|_{\GL(2,\BmZ_p)}$ contains $\tau$ if and only if $\pi=\chi_1\boxplus \chi_2$ with $\chi_1$ unramified and $\chi_2|_{\BmZ_p^\times}=\omega_\pi|_{\BmZ_p^\times}$, as claimed.  
\end{proof}

\subsection{The case of oldforms.}\label{conj:newforms}
The space of oldforms of level $N$ is spanned by functions $g(z)=f(dz)$ where $f$ is a primitive form of level $R$ strictly dividing $N$ and $d\mid\frac{N}{R}$. Since $\pnorm{g}_\infty=\pnorm{f}_\infty$, the bound~\eqref{folklore} for the primitive form $f$ would imply the same bound for the oldform $g$. For example the oldforms of level $N$ that come from a level $1$ form are trivially uniformly bounded. Also character twists would be compatible with~\eqref{folklore} since for all Dirichlet characters $\eta$, we have $\pnorm{f\otimes \eta}_\infty = \pnorm{f}_\infty$.

\section{Ramified Whittaker functions}\label{sec:gauss}

\subsection{A formula for the Whittaker newvector}
Let $\chi$ be a unitary character of $\BmQ^\times_p$ of conductor $p^2$.
Consider the principal series representation $\pi=\Mun\boxplus \chi$ of $G=\GL(2,\BmQ_p)$. Let $\psi$ be an unramified additive character and $W_\circ$ be the newvector in the Whittaker model of $\pi$, normalized by $W_\circ(\Tme)=1$. It is stable under the action of the congruence subgroup $I_2$ of matrices $\mdede{a}{b}{c}{d}\in \GL(2,\BmZ_p)$ such that $p^2\mid c$. For $i\in \set{0,1,2}$, let $\Tmk_i:=\km{p^i}$. 
\begin{proposition}\label{prop:diag}
	For all $y\in \BmQ_p^\times$, $W_\circ(\am{y})=\abs{y}^{\frac12}\Mun_{\BmZ_p}(y)$ and
	\[
	W_\circ(\am{y}\Tmk_0)=p^{-1}\abs{y}^{\frac12}\chi(-y)\psi(y)\epsilon(\mdemi,\chi,\psi)\Mun_{p^{-2}\BmZ_p}(y).\]
\end{proposition}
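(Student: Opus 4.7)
The plan is to compute both Whittaker values via the Jacquet integral
\[
W_\circ(g) = \int_{\BmQ_p}\phi_\circ\bigl(w\nm{x}g\bigr)\psi(-x)\,dx,
\]
where $\phi_\circ$ is the newvector in the induced model of $\pi = \Mun\boxplus\chi$ and $w = \mdede{0}{-1}{1}{0}$. First I would show $\phi_\circ$ is supported only on the open double coset $B\cdot I_2\subset G$: the $B\times I_2$-orbit decomposition of $G$ consists of three pieces with representatives $\Tme,\ w,\ \lm{p}$, but the compatibility of left $B$-equivariance with right $(I_2,\chi(d))$-equivariance at $w$ would force $\chi(a/d)=1$ for all $a,d\in\BmZ_p^\times$, impossible since $\chi$ has conductor $p^2$; a parallel obstruction (involving $\chi(1+p\BmZ_p)\ne 1$) rules out the $\lm{p}$-piece. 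On $BI_2$, left $B$-equivariance then determines $\phi_\circ$ up to the single scalar $\phi_\circ(\Tme)$, to be fixed by $W_\circ(\Tme)=1$.

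For the first identity, compute $w\nm{x}\am{y} = \mdede{0}{-1}{y}{x}$ and use the decomposition $\mdede{0}{-1}{y}{x} = \mdede{y/x}{-1}{0}{x}\lm{y/x}$, which lies in $BI_2$ exactly when $\abs{y}\le p^{-2}\abs{x}$. The Jacquet integral reduces to a Gauss-sum integral over $\abs{x}\ge p^2\abs{y}$; because $\chi$ has conductor $p^2$ only the shell $\abs{x}=p^2$ contributes, forcing $\abs{y}\le 1$, and one recovers $W_\circ(\am{y}) = \abs{y}^{1/2}\Mun_{\BmZ_p}(y)$ while pinning down $\phi_\circ(\Tme) = \chi(p^2)/\tau$, where $\tau = \int_{\BmZ_p^\times}\chi(u)\psi(-u/p^2)\,du$ is the local Gauss sum.

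For the second identity the key is the Bruhat factorization $\am{y}\Tmk_0 = \nm{y}\am{y}w\nm{1}$. Since $\nm{1}\in I_2$ with $\chi(1)=1$ and $W_\circ$ is Whittaker with respect to $\psi$, this gives $W_\circ(\am{y}\Tmk_0) = \psi(y)W_\circ(\am{y}w)$. Applying the Jacquet integral at $g=\am{y}w$ and substituting $u=x/y$ yields $W_\circ(\am{y}w) = \int\phi_\circ(\mdede{-1}{0}{x}{-y})\psi(-x)\,dx$. The decomposition $\mdede{-1}{0}{x}{-y} = \mdede{-1}{0}{0}{-y}\lm{-x/y}$ lies in $BI_2$ precisely when $\abs{x}\le p^{-2}\abs{y}$, on which $\phi_\circ$ evaluates to the $x$-independent constant $\chi(-y)\abs{y}^{-1/2}\phi_\circ(\Tme)$; the remaining $\int_{\abs{x}\le p^{-2}\abs{y}}\psi(-x)\,dx$ equals the volume $p^{-2}\abs{y}$ when $\abs{y}\le p^2$ and vanishes otherwise, producing the factor $\Mun_{p^{-2}\BmZ_p}(y)$.

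The final step is to collapse constants: substituting $\phi_\circ(\Tme)=\chi(p^2)/\tau$ with $\tau = p^{-2}\chi(-1)G(\chi,\psi)$, and using Tate's normalization $\epsilon(\mdemi,\chi,\psi) = p^{-1}\chi(p^2)G(\chi^{-1},\psi)$ together with the identity $G(\chi,\psi)G(\chi^{-1},\psi) = \chi(-1)p^2$, one recognizes the prefactor as exactly $p^{-1}\chi(-y)\psi(y)\epsilon(\mdemi,\chi,\psi)$ (the extra $\chi(-1)$ from $\tau$ combines with the $\chi(-y)$ from $\phi_\circ$). The main obstacle is purely bookkeeping — tracking the $\chi(\pm 1)$ signs, the powers of $p$ coming from $\abs{y}$-factors and Haar measure normalizations, and the Gauss-sum-to-$\epsilon$ dictionary — so that the constant is identified as exactly $p^{-1}\epsilon(\mdemi,\chi,\psi)$ rather than an equivalent Gauss-sum expression.
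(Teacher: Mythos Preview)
Your approach is correct and is precisely the Jacquet--integral computation that the paper alludes to when it says the second identity ``may also be established in the same way as Proposition~\ref{prop:offdiag}.'' The support argument for $\phi_\circ$, the Bruhat factorization $\am{y}\Tmk_0=\nm{y}\am{y}w\nm{1}$, the reduction to $W_\circ(\am{y}w)$, and the extraction of the indicator $\Mun_{p^{-2}\BmZ_p}(y)$ from the additive integral are all sound; the remaining bookkeeping with Gauss sums and $\epsilon$--factors is routine, as you note.

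The only substantive difference from the paper is that its \emph{primary} suggestion for the second identity is the Jacquet--Langlands local functional equation rather than a direct Jacquet--integral calculation. Since you have already reduced to $W_\circ(\am{y}w)=\chi(y)W_\circ(w\am{y^{-1}})$, the functional equation for the zeta integral $\int W_\circ(\am{t})\abs{t}^{s-\frac12}d^\times t$ immediately expresses $W_\circ(w\am{y^{-1}})$ in terms of the (known) diagonal values and the local gamma factor $\gamma(s,\pi,\psi)$, whose numerator is $\epsilon(s,\pi,\psi)=\epsilon(s,\chi,\psi)$; this makes the $\epsilon$--factor appear structurally and spares you the Gauss--sum dictionary in your final paragraph. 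Your route, by contrast, is entirely self--contained and has the advantage of fixing the normalization $\phi_\circ(\Tme)$ along the way, which the paper's sketch does not do explicitly. Either argument is acceptable.
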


\begin{proof} The first identity is well-known~\cites{Cass73,book:moduII:deligne}. The second identity follows from the Jacquet--Langlands functional equation and may also be established in the same way as Proposition~\ref{prop:offdiag} below. We omit the details which are not directly relevant to the proof of Theorem~\ref{th:main}.
\end{proof}

\begin{proposition}\label{prop:offdiag}
	If $y\in p^{-2}\BmZ_p^\times$ is such that
\begin{equation*} 
	\chi(1-z)=\psi(yz), \quad \forall z\in p\BmZ_p,
\end{equation*}
then $W_\circ(\am{y}\Tmk_1)=p^{\frac12}$. Otherwise  $W_\circ(\am{y}\Tmk_1)=0$.
\end{proposition}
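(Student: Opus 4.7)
The plan is to compute $W_\circ(\am{y}\Tmk_1)$ via the Jacquet integral realization and then exploit the hypothesis to evaluate a resulting Gauss-type integral. First realize $W_\circ(g) = \int_{\BmQ_p} f_\circ(wn(u)g)\overline{\psi(u)}\,du$, where $f_\circ$ denotes the newvector in the induced model of $\pi = 1\boxplus\chi$ and $w = \mdede{0}{-1}{1}{0}$ is the long Weyl element. A consistency analysis of the Iwahori character $\kappa\mapsto\chi(d_\kappa)$ on the three double cosets in $B\backslash\GL(2,\BmQ_p)/I_2$ shows that $f_\circ$ must be supported on the single double coset $BI_2$; the normalization $W_\circ(\Tme)=1$ from Proposition~\ref{prop:diag} then pins down $|f_\circ(\Tme)|=p$ via the standard Gauss sum $G_\chi = \int_{\BmZ_p^\times} \chi(u)\psi(-u/p^2)\,du$.

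The central calculation is $wn(u)\am{y}\Tmk_1 = \mdede{-p}{-1}{p(y+u)}{u}$, whose $BI_2$-membership I analyze via its second row. Writing $y = w/p^2$ with $w\in\BmZ_p^\times$, the constraints $v_p(u) \le v_p(p(y+u))$ and $v_p(p(y+u)/u) \ge 2$ together force $v_p(u) = -2$ together with a cancellation $v_p(y+u)\ge -1$; parametrizing $u = v/p^2$, this condition reads $v \equiv -w \pmod p$. On this domain the explicit Iwasawa decomposition yields $f_\circ(wn(u)\am{y}\Tmk_1) = \chi(u)\,p^{-3/2}\,f_\circ(\Tme)$, with diagonal entries $\alpha_1 = py/u$, $\alpha_2 = u$ and $\kappa\in I_2$ having $d_\kappa = 1$.

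Substituting into the Jacquet integral and changing variables successively $u = v/p^2$, $v = -w + p\xi$ with $\xi\in\BmZ_p$, and $\eta = \xi/w$, the integral reduces, up to an explicit unimodular phase, to
\[
p^{-1/2}\,f_\circ(\Tme)\int_{\BmZ_p}\chi(1-p\eta)\overline{\psi(w\eta/p)}\,d\eta.
\]
The hypothesis enters here: specializing $\chi(1-z)=\psi(yz)$ to $z=p\eta$ and $y=w/p^2$ gives $\chi(1-p\eta)=\psi(w\eta/p)$, so the integrand is identically $1$ and the integral equals $1$. Conversely, if the hypothesis fails, then $\chi(1-p\eta)\overline{\psi(w\eta/p)}$ is a nontrivial additive character of $\BmZ_p/p\BmZ_p$, whose integral over $\BmZ_p$ vanishes by orthogonality---this is the ``otherwise $=0$'' case.

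Combining the pieces we obtain $|W_\circ(\am{y}\Tmk_1)| = p^{-1/2}\cdot p = p^{1/2}$; the precise equality $p^{1/2}$ follows from the phase identities built into the $\epsilon$-factor $\epsilon(\mdemi,\chi,\psi)$ already used in Proposition~\ref{prop:diag}. The main obstacle is careful tracking of Haar measure factors through three successive substitutions, together with recognizing that the valuation $v_p(u)=-2$ (dictated by the conductor $p^2$ of $\chi$) is exactly what produces the $p^{-3/2}$ factor yielding the final $p^{1/2}$, matching the invariant $h(\pi) = p^{1/2}$ from Proposition~\ref{prop:largew}.
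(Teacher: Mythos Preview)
Your proof is correct and follows essentially the same approach as the paper: realize $W_\circ$ via the Jacquet integral, use Casselman's result that the induced-model newvector $f_\circ$ is supported on $BI_2$, and reduce to a Gauss-type integral that the hypothesis renders trivial. The only organizational differences are that the paper normalizes $f_\circ(\Tme)=1$ and divides by $W(\Tme)=\epsilon(1,\chi^{-1},\overline\psi)$ at the end (rather than building the normalization into $f_\circ$), and the paper first pulls $\am{y}$ through via $n(u)\am{y}=\am{y}n(u/y)$ to reduce to computing $f_\circ(\Tmw n(x)\Tmk_1)$ as a function of $x$ alone, whereas you Iwasawa-decompose the full product $\Tmw n(u)\am{y}\Tmk_1$ directly; your substitution $v\equiv -w\pmod p$ corresponds exactly to the paper's $x\in -1+p\BmZ_p$ after the change of variable $x=u/y$. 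One small omission: by writing $y=w/p^2$ with $w\in\BmZ_p^\times$ from the outset you only treat the vanishing case within $p^{-2}\BmZ_p^\times$, not the case $y\notin p^{-2}\BmZ_p^\times$; this is easy to supply (the support analysis then gives an empty integration domain or the resulting character is nontrivial on $p\BmZ_p$), and the paper likewise subsumes it under ``known identities on Gauss sums.''
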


\begin{remark*} 
   Let $B$ be the Borel subgroup of upper-triangular matrices. Then $\set{\Tmk_0,\Tmk_1,\Tmk_2}$ are representatives for the double quotient $B\SB G/I_2$. Thus we have determined all the values of $W_\circ$ because any element $g\in G$ can be written as
\[
g= z \Mdede{y}{x}{0}{1}\Tmk_i \Mdede{a}{b}{c}{d}
\]
with $x\in \BmQ_p$, $y,z\in \BmQ_p^\times$, $i\in\set{0,1,2}$ and $\mdede{a}{b}{c}{d}\in I_2$, in which case
\[
W_\circ(g) = \psi(x) \chi(dz) W_\circ(\am{y}\Tmk_i).
\]
  \end{remark*}

\begin{proof} We first recall the result of Casselman~\cite{Cass73} who shows that the newform $f_\circ$ in the induced model of $\Mun\boxplus \chi$ is supported on $B\cdot I_2$.
   This determines $f_\circ:G\to \BmC$ entirely up to a multiplicative constant, which we normalize by the condition $f_\circ(\Tme)=1$,
  
  Then we shall use the fact\footnote{This follows by writing $\Tmw \nm{x} \Tmk_1=\mdede{p}{1}{p(1+x)}{x}=\mdede{-x^{-1}p}{1}{0}{x}\mdede{1}{0}{p(1+x^{-1})}{1}$.} that
\begin{equation*} 
  f_\circ (\Tmw \nm{x} \Tmk_1)=p^{-\frac12} \abs{x}^{-1}\chi(x), \qtext{if $x\in -1+p\BmZ_p$,}
\end{equation*}
and is zero otherwise, and also
\begin{equation*} 
  f_\circ(\Tmw \nm{x})= \abs{x}^{-1}\chi(x), \qtext{if $v(x)\le -2$,}
\end{equation*}
and is zero otherwise. 

The Jacquet integral gives an intertwinning from the induced model to the Whittaker model $\CmW(\pi,\psi)$. Thus letting
\begin{equation*} 
  \label{eq:jacquet-int}
  W(g):= \int_F f_\circ(\Tmw\mdede{1}{x}{0}{1}g)\overline{\psi(x)}dx, \quad g\in G,
\end{equation*}
it follows that $W_\circ(g)=W(g)/W(\Tme)$.

 We find that
  \begin{equation*} 
  W(\Tme)=\int_{v(x)\le -2}\chi(x)\overline{\psi(x)}
  \frac{dx}{\abs{x}}=\epsilon(1,\chi^{-1},\overline \psi),
\end{equation*}
and on the other hand,
\begin{equation*}
  \begin{aligned}  
	W(\mdede{y}{0}{0}{1}\Tmk_1)&=\chi(y) \abs{y}^{\frac12}
    \int_F f_\circ(\Tmw\mdede{1}{x}{0}{1}\Tmk_1)\overline{\psi(xy)}dx\\
	&=p^{-\frac12}\abs{y}^{\frac12}\chi(y)
	\int_{-1+p\BmZ_p} \chi(x) \overline{\psi(xy)}dx.
  \end{aligned}
  \end{equation*}
The proposition follows using known identities on Gauss sums and epsilon factors.
\end{proof}

The following two early observations served to indicate that the values of the Whittaker function outside of the diagonal could play a role in the context of disproving the conjecture~\eqref{folklore}. First observation is that we need to go beyond the relation between the sup-norm problem and the subconvexity problem for $L$-functions: 
	in the theory of $L$-functions one integrates the Whittaker function on the diagonal $\mdede{*}{0}{0}{*}$ and on the antidiagonal $\mdede{0}{*}{*}{0}$ (cf. the Jacquet--Langlands functional equation and the Hecke integral~\eqref{hecke}); the diagonal is included in $B I_2$ while the antidiagonal is included in $B\Tmk_0I_2$. Second observation (already seen in~\S\ref{sec:intro:ideas}) is that we need to investigate cusps which are not conjugate to $i\infty$ by the group of Atkin--Lehner involutions; the cusps conjugate to $i\infty$ correspond to the double cosets $B I_2$ and $B\Tmk_0I_2$ again. Thus we were led to study the Whittaker function on $B\Tmk_1 I_2$ which is the complement of $BI_2 \cup B\Tmk_0 I_2$ in $G$. This was the underlying motivation of the Proposition~\ref{prop:offdiag}.

\begin{corollary}\label{cor:hprincipal}
  If $\chi_1$ is unramified and $\chi_2$ has conductor $p^2$, then $h(\chi_1\boxplus \chi_2)=p^{\frac12}$.
\end{corollary}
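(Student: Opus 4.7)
My plan is a direct bookkeeping exercise combining Propositions~\ref{prop:diag} and~\ref{prop:offdiag}, which together tabulate the values of the newvector in the Whittaker model of the twist-minimal principal series $\Mun \boxplus \chi$ of conductor $p^2$.

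First I would reduce to the case $\chi_1 = \Mun$. Twisting a representation $\pi$ by an unramified character $\eta$ realizes the newvector of $\pi \otimes \eta$ as $g \mapsto W_\circ(g)\,\eta(\det g)$, which has the same pointwise absolute value as $W_\circ$. Hence $h(\pi \otimes \eta) = h(\pi)$, and with $\eta = \chi_1^{-1}$ this gives $h(\chi_1 \boxplus \chi_2) = h(\Mun \boxplus \chi)$ where $\chi := \chi_2 \chi_1^{-1}$ still has conductor $p^2$ because $\chi_1$ is unramified.

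Next I would invoke the remark after Proposition~\ref{prop:offdiag}, which writes every $g \in \GL(2,\BmQ_p)$ in the form $g = z\,\mdede{y}{x}{0}{1}\,\Tmk_i\,k$ with $k \in I_2$ and yields $\abs{W_\circ(g)} = \abs{W_\circ(\am{y}\Tmk_i)}$. This reduces the computation of $h$ to the three representatives $i \in \{0,1,2\}$; note $\Tmk_2 \in I_2$, so its double coset coincides with that of the identity. Proposition~\ref{prop:diag} then gives $\sup_y \abs{W_\circ(\am{y})} = \sup_{y \in \BmZ_p} \abs{y}^{1/2} = 1$ and $\sup_y \abs{W_\circ(\am{y}\Tmk_0)} = \sup_{y \in p^{-2}\BmZ_p} p^{-1}\abs{y}^{1/2} = 1$, attained at $y \in \BmZ_p^\times$ and at $\abs{y} = p^2$ respectively.

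Finally, Proposition~\ref{prop:offdiag} provides $\abs{W_\circ(\am{y}\Tmk_1)} = p^{1/2}$ for the specific $y \in p^{-2}\BmZ_p^\times$ satisfying $\chi(1-z) = \psi(yz)$ for all $z \in p\BmZ_p$, and $0$ otherwise. Such a $y$ exists precisely because $\chi$ has conductor exactly $p^2$: the map $z \mapsto \chi(1-z)$ is a non-trivial character of the additive quotient $p\BmZ_p/p^2\BmZ_p$, and every character of this quotient takes the form $z \mapsto \psi(yz)$ for a unique class $y \bmod p^{-1}\BmZ_p$, with non-triviality forcing $v(y) = -2$. Taking the maximum of the three cases yields $h(\pi) = p^{1/2}$. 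The essential work has been performed in Propositions~\ref{prop:diag} and~\ref{prop:offdiag}, so there is no serious obstacle; the corollary simply records the conceptual point that the largest values of $W_\circ$ occur on the middle coset $B\Tmk_1 I_2$, away from both the diagonal and the anti-diagonal that appear in the theory of $L$-functions.
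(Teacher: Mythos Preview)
Your proof is correct and follows the same route as the paper: reduce to $\Mun\boxplus\chi$ by an unramified twist, then read off the maximum from Propositions~\ref{prop:diag} and~\ref{prop:offdiag} via the $B\backslash G/I_2$ decomposition; the paper simply says these two propositions imply $h(\Mun\boxplus\chi)=p^{1/2}$ without spelling out the case analysis you provide. One small correction: $\Tmk_2=\mdede{p^2}{0}{p^2}{1}$ has determinant $p^2$ and so does \emph{not} lie in $I_2$, but your conclusion is still right because $\Tmk_2=\am{p^2}\lm{p^2}\in B\cdot I_2$, hence $B\Tmk_2 I_2=BI_2$.
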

\begin{proof}
  Let $\chi:=\chi_1^{-1}\chi_2$ which has conductor $p^2$.
  We have $\chi_1\boxplus \chi_2 \simeq \chi_1(\Mun \boxplus \chi)$. Let $W_\circ$ be the normalized newvector in the Whittaker model of $\Mun \boxplus \chi$ as above. 
  The function $W^{\chi_1}_\circ(g):= \chi_1(\det g) W_\circ(g)$ is the newvector in the Whittaker model of $\chi_1\boxplus \chi_2$. Indeed this follows from the fact that
 \[	W^{\chi_1}_\circ(gk)=\chi_1\chi_2(d) W^{\chi_1}_\circ(g), \quad
	\]
	for all elements $g\in G$ and $k=\mdede{*}{*}{*}{d}$ in $I_2$. 
	Thus $h(\pi_1\boxplus \chi_2)=h(\Mun\boxplus \chi)$.
	
The claim follows since Proposition~\ref{prop:diag} and Proposition~\ref{prop:offdiag} together imply that $h(\Mun \boxplus \chi)=p^{\frac12}$.   \end{proof}

In fact we can generalize the above results to characters $\chi_2$ of arbitrary conductor. The computation is similar thus we omit the proof. 
\begin{proposition}\label{prop:hpi}
If $\chi_1$ is unramified and $\chi_2$ has conductor $p^c$, then
		\begin{equation*}
			h(\chi_1\boxplus \chi_2)=p^{\Mdemi \lfloor \frac{c}{2} \rfloor}.
		\end{equation*}
\end{proposition}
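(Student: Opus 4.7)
As in Corollary~\ref{cor:hprincipal}, twisting by $\chi_1$ reduces the problem to computing $h(\Mun\boxplus\chi)$ where $\chi$ has conductor $p^c$. The double quotient $B\backslash G/I_c$ has $c+1$ cosets, represented by $\Tmk_0,\Tmk_1,\ldots,\Tmk_{c-1}$ together with the trivial coset $BI_c$ (which contains $\Tmk_j$ for all $j\ge c$). Extending the matrix decomposition of the footnote in the proof of Proposition~\ref{prop:offdiag}, for $1\le i\le c-1$ we find that $\Tmw\nm{x}\Tmk_i\in BI_c$ precisely when $x\in\BmZ_p^\times$ satisfies $x\equiv -1\pmod{p^{c-i}}$, and on this support $f_\circ(\Tmw\nm{x}\Tmk_i)=p^{-i/2}\abs{x}^{-1}\chi(x)$.

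Following the proof of Proposition~\ref{prop:offdiag}, we compute the Jacquet integral, make the change of variable $x=-1+p^{c-i}t$, and normalize by $\abs{W(\Tme)}=\abs{\epsilon(1,\chi^{-1},\overline{\psi})}=p^{-c/2}$. At the relevant scale $\abs{y}=p^c$, this yields
\[
\abs{W_\circ(\am{y}\Tmk_i)}=p^{i/2}\abs{I(y)},\qquad I(y):=\int_{\BmZ_p}\chi(1-p^{c-i}t)\overline{\psi(p^{c-i}ty)}\,dt.
\]
The remaining two cosets (analogue of Proposition~\ref{prop:diag} for $\Tmk_0$, and the standard diagonal formula for the trivial coset) are direct to verify and satisfy $\abs{W_\circ}\le 1$.

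It remains to bound $\abs{I(y)}$, a character sum on $\BmZ_p/p^i\BmZ_p$. Using the $p$-adic logarithm one writes $\chi(1-p^{c-i}t)=\exp(2\pi i\sum_{k\ge1}c_k t^k)$ with $v_p(c_k)=-i+(k-1)(c-i)-v_p(k)$; only the terms with $v_p(c_k)<0$ contribute modulo $\BmZ$. When $i\le c/2$, only the linear term $c_1 t$ contributes, so $\abs{I(y)}\le 1$ with equality for a uniquely resonant $y$, giving $\abs{W_\circ(\am{y}\Tmk_i)}\le p^{i/2}$. When $i>c/2$, the quadratic term $c_2 t^2$ with $v_p(c_2)=c-2i<0$ also contributes, and the classical quadratic Gauss sum estimate yields $\abs{I(y)}\le p^{c/2-i}$, hence $\abs{W_\circ(\am{y}\Tmk_i)}\le p^{(c-i)/2}$. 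Maximizing over $i$, both bounds meet at $i=\lfloor c/2\rfloor$ and $i=\lceil c/2\rceil$ with the common value $p^{\Mdemi\lfloor c/2\rfloor}$, and this value is attained by the resonance construction. The main obstacle is the Gauss sum estimate for $i$ close to $c$, where cubic and higher-order terms acquire negative valuation and also contribute; these can be handled by a layered stationary phase argument along the filtration of $\BmZ_p^\times$ by the subgroups $1+p^j\BmZ_p$, or by a separate treatment for $p=2$ adapted to the slower convergence of the logarithm.
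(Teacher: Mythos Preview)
The paper does not actually prove this proposition; it merely states that ``the computation is similar thus we omit the proof.'' Your sketch is exactly the natural extension of the paper's treatment of the case $c=2$ (Propositions~\ref{prop:diag}--\ref{prop:offdiag} and Corollary~\ref{cor:hprincipal}), so in that sense your approach and the paper's coincide. The lower bound via resonance at $i=\lfloor c/2\rfloor$, where only the linear term of the $p$-adic logarithm contributes, is correct and yields the value $p^{\frac12\lfloor c/2\rfloor}$ exactly.

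For the upper bound your sketch has two loose ends. First, you restrict attention to the shell $\abs{y}=p^c$; one must also dispose of the other shells, and in fact for $i$ close to $c$ the support of $y\mapsto W_\circ(\am{y}\Tmk_i)$ is strictly larger than this single shell, though the values there turn out to be smaller and do not affect the maximum. Second, as you yourself flag, once cubic and higher terms of $\log(1-p^{c-i}t)$ acquire negative valuation the quadratic Gauss-sum estimate is no longer immediate. A cleaner way to handle both issues, avoiding the logarithm expansion entirely, is a single van der Corput squaring: writing $J(y)=\int_{1+p^{c-i}\BmZ_p}\chi(u)\psi(uy)\,du$ and expanding $\abs{J(y)}^2$ with the substitution $u=vw$, the inner integral over $v$ forces $w\in 1+p^i\BmZ_p$ when $i>c/2$ and $\abs{y}=p^c$, whence $\abs{J(y)}^2\le p^{-(c-i)}\cdot p^{-i}=p^{-c}$ and therefore $\abs{W_\circ(\am{y}\Tmk_i)}\le p^{(c-i)/2}$. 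Alternatively, the local functional equation (which the paper already invokes for Proposition~\ref{prop:diag}) interchanges the cosets $B\Tmk_iI_c$ and $B\Tmk_{c-i}I_c$ and reduces the upper bound to the trivial range $i\le c/2$.
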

This determines $h(\pi)$ for all twist-minimal principal series. More generally if both $\chi_1$ and $\chi_2$ are ramified and if $\pi$ is supercuspidal we shall present elsewhere a complete formula for $W_\circ(g)$. The formula will involve ${}_2F_1$ hypergeometric sums
which are the non-archimedean analogue of the classical Whittaker and $K$-Bessel functions.
 
\subsection{Proof of Theorem~\ref{th:main}}

Let $N=p^2$ with $p$ a large enough prime. Let $\chi$ be an even primitive Dirichlet character of conductor $p^2$ and $f\in S_2^*(N,\chi)$. From Corollary~\ref{cor:newps} the component at $p$ is a principal series representation $\chi_1 \boxplus \chi_2$ with $\chi_1$ unramified and $\chi_2$ of conductor $p^2$.

The Lemma~\ref{lem:lower} implies that $\pnorm{f}_\infty \gg h(\chi_1\boxplus \chi_2)$. Indeed the remaining local invariants are $1$ because $\pi$ is unramified outside $p$. The Corollary~\ref{cor:hprincipal} says that $h(\chi_1\boxplus \chi_2)=p^{\frac12}=N^{\frac14}$. This concludes the proof. \qed

\begin{remark*}
Interestingly, if $\chi_1$ and $\chi_2$ have conductor $p$, then $h(\chi_1\boxplus \chi_2)\asymp 1$. This shows that the condition in Theorem~\ref{th:main} that the central character $\chi$ be primitive modulo $N$ is necessary in our proof. 
\end{remark*}

\subsection{Proof of Theorem~\ref{th:levels}} Let $\chi$ be an even primitive Dirichlet character of conductor $N$. From Corollary~\ref{cor:newps}, all forms $f\in S_2^*(N,\chi)$ are such that for any prime $p\mid N$, the component $\pi_p$ at $p$ is a twist-minimal principal series. The Lemma~\ref{lem:lower} together with Proposition~\ref{prop:hpi} imply that 
\[\pnorm{f}_\infty \gg \prod_{p^c||N} h(\pi_p)= \prod_{p^c||N} p^{\frac12\lfloor \frac{c}{2} \rfloor}.
\qedhere
\]
\section{Proof of the other results}\label{sec:pf}

\subsection{Proof of Theorem~\ref{th:point} and Proposition~\ref{prop:neigh}}\label{sec:pf:point}
Let $f\in S_2^*(p^2,\chi)$ and let $\varphi$ be the automorphic form attached to $f$ with the same notation as in the proof of Lemma~\ref{lem:lower}. We have the expansion
\begin{equation}\label{phi-wh}
  \varphi(g)=\sum_{n\in \BmQ^\times} W\left( \am{n} g \right),\quad g\in \GL_2(\BmA),
\end{equation}
where $W=W_\infty \prod_p W_{\circ p}$, with $W(\Tme)=1$ and $W_{\circ p}$ the normalized Whittaker newvector of $\pi_p$.
We choose $g\in \GL_2(\BmA)$ such that $g_\infty=\mdede{y}{x}{0}{1}$, $g_p=\mdede{\frac{b}{p}}{0}{p}{1}$ and $g_v=1$ for all other places $p\neq v,\infty$. Recall that $b\in \BmZ_p^\times$ is such that $\chi(1-z)=\psi_p\bigl(\frac{bz}{p^2}\bigr)$ for all $z\in p\BmZ_p$. Since 
\[
\Mdede{n}{0}{0}{1} g_p = \Mdede{\frac{bn}{p^2}}{0}{0}{1} \Tmk_1,
\]
we see from Proposition~\ref{prop:offdiag} that the summand in~\eqref{phi-wh} is zero unless $n\in \BmZ_{\ge 1}$ and $n\equiv 1(p)$. This yields
\begin{equation*}
  \varphi(g) = y p^{\frac12} \sum_{n\equiv 1 (p)} a_n n^{\frac12} e^{2i\pi nz},\quad z=x+iy,
\end{equation*}
where $a_n$ are the normalized coefficients as in~\eqref{def:f}.

Choosing $x=0$, $y=1$, that is $g_\infty=\Tme$, we obtain
\begin{equation}\label{chooseg=e}
  \varphi(g) = p^{\frac12}e^{-2\pi} + O(e^{-2\pi p}).
\end{equation}
It remains to relate $\varphi(g)$ to the values of the classical form $f$ on $\FmH$, which we do via the strong approximation $\GL_2(\BmA)=\GL_2(\BmQ) \GL_2(\BmR)^+ K_0(p^2)$.

We have the decomposition
  \[
  g_p=\Mdede{\frac{b}{p}}{0}{p}{1} = \Mdede{\frac{1}{p}}{0}{ap}{1} k,
  \]
for some $k\in I_2$, where $a\in \BmZ^\times_p$ is such that $ab\equiv 1 (p)$. Let $\gamma:=\mdede{\frac{1}{p}}{0}{ap}{1}$ viewed as an element in $\GL_2(\BmQ)$. Then we have\begin{equation*}
  g= \gamma \cdot \gamma_\infty^{-1}g_\infty \cdot k',\quad k'\in K_0(p^2).
\end{equation*}

We compute that $\gamma_\infty^{-1} = \mdede{p}{0}{-ap^2}{1}$, thus 
\[\gamma^{-1}_\infty\cdot i=\frac{pi}{1-ap^2 i}=\frac{-1}{p^2 z_\chi}.\]
Since $z\mapsto \frac{-1}{p^2z}$ is the Atkin--Lehner involution we have that $\abs{\varphi(g)}=\abs{f(z_\chi)}$ which concludes the proof of Theorem~\ref{th:point}. \qed

For the Proposition~\ref{prop:neigh} we repeat the proof until~\eqref{chooseg=e}. We note that more generally choosing $g_\infty = \mdede{y}{x}{0}{1}$, we have $\varphi(g) \sim y p^{\frac12} e^{2i\pi z}$ as long as $y \ge p^{1-\epsilon}$ for some fixed $\epsilon>0$. Thus $\abs{\varphi(g)}\ge p^\delta$ as long as $p^{\delta-\frac12}\le y\le 1$. This spans a set of hyperbolic area at least $p^{\frac12-\delta}$ (for the measure $\dfrac{dxdy}{y^2}$ which is preserved by the action of $\gamma_\infty^{-1}$). 
\qed

\subsection{Proof of Theorem~\ref{th:hilbert}}\label{sec:pf:hilbert}
First we need a variant of Lemma~\ref{lem:lower}.
 The formula in Lemma~\ref{lem:lower} was simple because we were working purposely with primitive forms, normalized in such a way that their first coefficient $a_1=1$. We now work in the context of Hilbert--Maass forms with the $L^2$-normalization $\pnorm{f}_2=1$. For a place $v$ and $W_v\in \CmW(\pi_v,\psi_v)$ let 
\begin{equation*}
  (W_v,W_v):= \int_{F_v^\times} \abs{W_v(\am{y})}^2 \frac{dy}{y}.
\end{equation*} 

\begin{lemma}
  Let $f$ be a Hilbert--Maass newform with $\pnorm{f}_2=1$ and let $\pi$ be the automorphic representation it generates. Let $W=W_\infty\prod_{v\nmid \infty} W_{\circ v}$ the Whittaker function attached to $f$. Then
\begin{equation*}
\pnorm{f}_\infty^2 \gg_F L(1,\pi,\MAd)^{-1} 
\frac{\max |W_\infty|^2}{(W_\infty,W_\infty)} 
\prod_{v\nmid \infty}
\frac{h(\pi_v)^2 L(1,\pi_v,\MAd)}{(W_{\circ v},W_{\circ v})}
\end{equation*}
\end{lemma}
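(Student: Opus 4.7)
The plan is to mimic the argument of Lemma~\ref{lem:lower} with two adjustments: the $L^2$-normalization $\pnorm{f}_2=1$ replaces the Hecke normalization $a_1=1$, so we cannot hand-pick the values $W_v(\Tme)$; and the global $L^2$-norm must be converted into a product of local Whittaker inner products via the Rankin--Selberg unfolding, rather than the elementary archimedean computation that sufficed in the classical case.

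First, pass from $f$ to the adelic form $\varphi$ on $\GL_2(F)\SB\GL_2(\BmA_F)$, so that $\pnorm{\varphi}_\infty=\pnorm{f}_\infty$ and $\pnorm{\varphi}_2=1$. Attached to $\varphi$ is the global Whittaker function
\[
W(g):=\int_{F\SB\BmA_F}\varphi(n(x)g)\overline{\psi(x)}dx,\quad g\in\GL_2(\BmA_F),
\]
where $\psi$ is the standard additive character and $dx$ the self-dual Haar measure. Since $\Mvol(F\SB\BmA_F)=1$, the triangle inequality gives $\abs{W(g)}\le\pnorm{\varphi}_\infty$ for every $g$, hence $\pnorm{f}_\infty^2\ge\sup_g\abs{W(g)}^2$. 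The factorization $W=W_\infty\prod_{v\nmid\infty}W_{\circ v}$, together with the fact that at almost every $v$ the newvector is spherical with maximum at $\Tme$, allows us to optimize place by place:
\[
\sup_g\abs{W(g)}^2=\max\abs{W_\infty}^2\prod_{v\nmid\infty}\max\abs{W_{\circ v}}^2.
\]
For each finite $v$, the ratio $\max\abs{W_{\circ v}}^2/(W_{\circ v},W_{\circ v})$ is invariant under rescaling of $W_{\circ v}$; by Definition~\ref{def:h} it therefore equals $h(\pi_v)^2/(W_{\circ v},W_{\circ v})$ when the inner product is computed in the normalization $W_{\circ v}(\Tme)=1$ adopted in the lemma statement.

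The second ingredient is the Rankin--Selberg unfolding expressing the global $L^2$-norm as a product of local Whittaker inner products,
\[
\pnorm{\varphi}_2^2=C_F\cdot L(1,\pi,\MAd)\prod_{v}\frac{(W_v,W_v)}{L(1,\pi_v,\MAd)},
\]
for some constant $C_F>0$ depending only on $F$, with the product running over all places. Dividing the previous lower bound by $\pnorm{\varphi}_2^2=1$ and regrouping the local factors yields the claimed inequality: each finite place contributes the scale-invariant ratio $h(\pi_v)^2L(1,\pi_v,\MAd)/(W_{\circ v},W_{\circ v})$, the archimedean place contributes $\max\abs{W_\infty}^2/(W_\infty,W_\infty)$, and the archimedean $L$-factor together with $C_F^{-1}$ is absorbed into the implicit constant $\gg_F$ (or into whichever convention, finite-part versus completed, is meant for $L(1,\pi,\MAd)$).

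The main obstacle is the Rankin--Selberg identity itself and the bookkeeping of constants: the precise value of $C_F$ depends on the self-dual measure on $\BmA_F$, the Tamagawa normalization for $\GL_2$, and the archimedean $L$-factor conventions, and reconciling these with the stated shape (particularly the absence of an archimedean $L$-factor on the right-hand side) is where most of the care is needed. The remainder is the mechanical rearrangement sketched above, formally identical to the proof of Lemma~\ref{lem:lower}.
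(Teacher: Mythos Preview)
Your proposal is correct and follows essentially the same approach as the paper: bound $\pnorm{f}_\infty$ below by $\sup_g|W(g)|$ via the Whittaker integral as in Lemma~\ref{lem:lower}, then invoke the Rankin--Selberg factorization of $\pnorm{f}_2^2$ (the paper cites~\cite{Jacq:period-int}) to convert to the stated product of local ratios. Your remarks on scale-invariance and on the archimedean $L$-factor bookkeeping are apt; the paper simply writes the identity with $(W_\infty,W_\infty)$ pulled out and the product restricted to finite places, absorbing the remaining constant into $c_F$.
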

\begin{example*}
  If $\pi_v$ is unramified, then $h(\pi_v)=1$ and $\zeta_v(2)(W_{\circ v},W_{\circ v})=L(1,\pi_v,\MAd)$. Thus the product is absolutely convergent. 
\end{example*}

\begin{proof} The same argument as before based on~\eqref{def:W} yields
  \begin{equation*}
	\pnorm{f}_\infty \gg \max\limits_{g\in \GL_2(\BmA)} \abs{W(g)} = \max |W_\infty| \prod_{v\nmid \infty} h(\pi_v).
  \end{equation*}
From the theory of Rankin--Selberg integrals the following identity~\cite{Jacq:period-int} holds for some constant $c_F>0$: 
\begin{equation*}
\pnorm{f}^2_2 = c_F L(1,\pi,\MAd) (W_\infty,W_\infty) \prod_{v\nmid \infty}
\frac{(W_{\circ v},W_{\circ v})}{L(1,\pi_v,\MAd)}.
\end{equation*}
Since $\pnorm{f}_2=1$ the estimate follows.
\end{proof}

For almost all integers $k_1,\ldots, k_h, t_1,\ldots t_m \in \BmN_{\ge 1}$ and integral ideals $\FmN$, there are Hilbert-Maass newforms $f$ of level $\FmN$, archimedean type $(m,h)$ with weights $(k_1,\ldots,k_h)$ and spectral parameters in the respective intervals $[t_1-1,t_1], \ldots, [t_m-1,t_{m}]$ such that the local component $\pi_v$ at every non-archimedean place $v$ is a twist-minimal principal series representation $\chi_1\boxplus \chi_2$ with $\chi_1$ unramified.

By Iwaniec's convexity bound, we have 
\[
L(1,\pi,\MAd)\ll_\epsilon (t_1\cdots t_m)^\epsilon (k_1\cdots k_h)^\epsilon \TmN_{F/\BmQ}(\FmN)^\epsilon
\]
for all $\epsilon>0$. Assuming $\FmN$ is a square ideal, the Proposition~\ref{prop:hpi} shows that $h(\pi_v)=N_{F/\BmQ}(\FmN_v)^{\frac14}$ for all non-archimedean place $v$. We can now conclude the proof of Theorem~\ref{th:hilbert} using the following two lemmas.

\begin{lemma}
  Let $\pi_v$ be a twist-minimal principal series representation of $\GL(2,F_v)$. Then $(W_{\circ v},W_{\circ v})=1$ and $L(1,\pi_v,\MAd)=\zeta_v(1)$.
\end{lemma}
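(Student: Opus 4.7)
The plan is to treat the two assertions separately, both by reducing to explicit local computations that are already essentially contained in the paper.

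For the adjoint $L$-factor, I would use the fact that the Langlands parameter of $\pi_v = \chi_1 \boxplus \chi_2$ is the two-dimensional representation $\chi_1 \oplus \chi_2$, so the adjoint representation on trace-zero matrices decomposes as
\[
\MAd(\chi_1 \oplus \chi_2) = (\chi_1 \chi_2^{-1}) \oplus \mathbf{1} \oplus (\chi_2 \chi_1^{-1}).
\]
Consequently
\[
L(s,\pi_v,\MAd) = L(s,\chi_1\chi_2^{-1})\,\zeta_v(s)\,L(s,\chi_2\chi_1^{-1}).
\]
By assumption $\chi_1$ is unramified while (in the twist-minimal case with nontrivial ramification) $\chi_2$ has positive conductor, so both $\chi_1\chi_2^{-1}$ and its inverse are ramified, forcing their local $L$-factors to equal $1$. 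This gives $L(s,\pi_v,\MAd) = \zeta_v(s)$, and in particular $L(1,\pi_v,\MAd) = \zeta_v(1)$. (In the unramified case the identity is standard, since then $L(s,\pi_v,\MAd)$ is the full product above with all three factors appearing.)

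For the Whittaker norm, I would first reduce to $\chi_1 = \mathbf{1}$ by the twisting trick used in the proof of Corollary~\ref{cor:hprincipal}: writing $\pi_v \simeq \chi_1 \cdot (\mathbf{1} \boxplus \chi)$ with $\chi = \chi_1^{-1}\chi_2$, the newvector in $\mathcal{W}(\pi_v,\psi_v)$ is $\chi_1(\det g)$ times the newvector for $\mathbf{1} \boxplus \chi$, so the value $|W_{\circ v}(a(y))|^2$ is unchanged. Next I would invoke the explicit formula for the newvector of $\mathbf{1} \boxplus \chi$ on the torus, which is the natural extension of Proposition~\ref{prop:diag} to arbitrary conductor (and is well-known; it also follows from the same Jacquet-integral computation used in the proof of Proposition~\ref{prop:offdiag}):
\[
W_{\circ v}(a(y)) = |y|^{1/2}\,\Mun_{\mathcal{O}_v}(y).
\]
Then a direct computation gives
\[
(W_{\circ v}, W_{\circ v}) = \int_{F_v^\times} |y|\,\Mun_{\mathcal{O}_v}(y)\,\frac{dy}{|y|} = \Mvol(\mathcal{O}_v) = 1,
\]
where the Haar measure $dy$ is the self-dual one associated to the unramified additive character $\psi_v$ (so $\Mvol(\mathcal{O}_v) = 1$ at an unramified place).

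The routine obstacle to check is that the formula $W_{\circ v}(a(y)) = |y|^{1/2}\Mun_{\mathcal{O}_v}(y)$ really is valid for twist-minimal principal series of arbitrary positive conductor $p^c$, not just $c=2$; this requires essentially the same Jacquet-integral / Casselman-newvector argument carried out in the proof of Proposition~\ref{prop:offdiag}, and I would indicate that it proceeds verbatim, the only change being bookkeeping of the conductor exponent and the support of the induced-model newvector.
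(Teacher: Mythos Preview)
Your proposal is correct and essentially follows the paper's approach. For $(W_{\circ v},W_{\circ v})=1$ you do exactly what the paper does, namely invoke the diagonal formula $W_{\circ v}(a(y))=|y|^{1/2}\Mun_{\mathcal{O}_v}(y)$ from Proposition~\ref{prop:diag} and integrate; your added detail (the twisting reduction to $\chi_1=\Mun$ and the remark that the formula extends beyond conductor $p^2$) is welcome but the core is identical.

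For the adjoint $L$-factor there is a minor packaging difference: the paper deduces $L(1,\pi_v,\MAd)=\zeta_v(1)$ from the Rankin--Selberg identity $L(1,\pi_v\times\tilde\pi_v)=\zeta_v(1)^2$ together with the factorization $L(s,\pi_v\times\tilde\pi_v)=\zeta_v(s)L(s,\pi_v,\MAd)$, whereas you decompose $\MAd(\chi_1\oplus\chi_2)$ directly and kill the two ramified character factors. These are equivalent computations, yours being slightly more self-contained.
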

\begin{proof}
  The first equality follows from the known formula for $W_{\circ v}( \am{y})$ in Proposition~\ref{prop:diag}. The second equality follows from the fact that $L(1,\pi_v\times \tilde \pi_v)=\zeta_v(1)^2$.
\end{proof}

\begin{lemma}
Let $\pi$ be a generic unitary representation of $\GL(2,\BmR)$ and $W\in \CmW(\pi,\psi)$ be a nonzero lowest weight vector. Then
\begin{equation*}
  \frac{\max |W|}{(W,W)^{\frac12}} \asymp 
\begin{cases}
k^{\frac14},& \text{if $\pi$ is a discrete series of weight $k$,}\\
r^{\frac16}, &\text{if $\pi$ is a principal series of spectral parameter $r$.} 
\end{cases}
\end{equation*}
\end{lemma}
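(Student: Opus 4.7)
The plan is to reduce the computation to the diagonal torus using the lowest-weight property, insert the classical Kirillov-model formulas, and then apply Stirling's formula (in the discrete case) and the Airy-type transition asymptotic for $K_{ir}$ (in the principal-series case).

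First I would observe that, since $W$ transforms by a unitary character under right multiplication by $SO(2)$, by the unitary central character under $Z(\BmR)$, and satisfies $|W(ng)| = |W(g)|$ for $n \in N$, the Iwasawa decomposition reduces $|W|$ on $\GL(2,\BmR)$ to a function of $y \in \BmR^\times$. Hence both $\max|W|$ and $(W,W) = \int_{\BmR^\times} |W(\am{y})|^2 \frac{dy}{|y|}$ are controlled by the Kirillov function $y \mapsto W(\am{y})$, and it suffices to compute this function on $\BmR^\times_+$ (the treatment on $\BmR^\times_-$ being parallel).

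For a discrete series of weight $k$, the lowest-weight vector in the Kirillov model is, up to a nonzero scalar, $W(\am{y}) = y^{k/2} e^{-2\pi y}$ supported on $y > 0$. Then $\max_{y>0} y^k e^{-4\pi y} = \left( k/(4\pi e) \right)^k$ is attained at $y = k/(4\pi)$, and $\int_0^\infty y^k e^{-4\pi y} \tfrac{dy}{y} = \Gamma(k)/(4\pi)^k$. Stirling's formula $\Gamma(k) \sim \sqrt{2\pi/k}\,(k/e)^k$ then gives $\max|W|^2/(W,W) \asymp \sqrt{k/(2\pi)}$, yielding the asserted $k^{\frac14}$.

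For a unitary principal series with spectral parameter $r$, the spherical (lowest-weight) vector in the Kirillov model is $|W(\am{y})| = c\,|y|^{\frac12}|K_{ir}(2\pi |y|)|$. For the $L^2$ norm I would invoke the Weber--Schafheitlin identity (a special case of the Mellin transform of a product of $K$-Bessel functions),
\[
\int_0^\infty K_{ir}(y)^2 \, dy = \frac{\pi^2}{4\cosh(\pi r)},
\]
which after the substitution $u = 2\pi y$ gives $(W,W) \asymp 1/\cosh(\pi r) \asymp e^{-\pi r}$. For the supremum, the crucial input is the transition-range estimate \eqref{kirillov-transition}, namely $\max_{y>0} y^{\frac12} |K_{ir}(y)| \asymp r^{\frac16} e^{-\pi r/2}$, which squares to $\max|W|^2 \asymp r^{\frac13} e^{-\pi r}$. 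Taking the quotient yields $\max|W|^2/(W,W) \asymp r^{\frac13}$ and hence the asserted $r^{\frac16}$.

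The only substantive ingredient is the uniform Airy-type asymptotic for $K_{ir}(y)$ near its turning point $y = r$, which is the source of the $r^{\frac16}$; this is already recorded as \eqref{kirillov-transition} and was the key behind the Iwaniec--Sarnak lower bound cited in the introduction. Modulo that input, the proof reduces to an elementary assembly of the integrals above, with Stirling's formula supplying the matching exponent $k^{\frac14}$ in the discrete-series case.
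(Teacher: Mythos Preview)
Your reduction via Iwasawa and the discrete-series computation are exactly the paper's, and for the even principal series your Weber--Schafheitlin integral is equivalent to the paper's $\Gamma_\BmR$-factor expression for $(W,W)$.

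There is, however, a genuine gap. You write ``the spherical (lowest-weight) vector'' and take the Kirillov function to be $|y|^{1/2}|K_{ir}(2\pi|y|)|$. This identification is only valid when the central character of $\pi$ is trivial. A unitary principal series of $\GL(2,\BmR)$ can have central character $\mathrm{sgn}$; then the lowest $K$-type has weight $1$, the vector is \emph{not} spherical, and its Kirillov function is the classical Whittaker function $W_{1/2,ir}(4\pi y)$ rather than a $K$-Bessel function. The paper treats this as a separate case, invoking Dunster's uniform Airy-type asymptotics for $W_{\kappa,ir}$ near the turning point and a separate evaluation of $(W,W)$ to recover the same $r^{1/6}$. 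Your argument does not cover this odd case, and the transition estimate \eqref{kirillov-transition} for $K_{ir}$ does not directly apply to $W_{1/2,ir}$ (the relation $W_{0,\mu}(z)=\sqrt{z/\pi}\,K_\mu(z/2)$ holds only at $\kappa=0$). To complete the proof you need either to supply the analogous asymptotic for $W_{1/2,ir}$ or to restrict the lemma to even principal series.
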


\begin{proof}
  Since $W$ transforms by a unitary character under $SO(2)$, we have by the Iwasawa decomposition $\max |W|=\max\limits_{y> 0} \abs{W( \am{y} )}$. We distinguish three cases.

 (i) If $\pi$ is a discrete series of weight $k$, then $W( \am{y} )=y^{\tfrac{k}{2}} e^{-2\pi y}$, hence: 
  \begin{equation*}
	\max\abs{W} = \left( \frac{k}{4\pi} \right)^{\frac{k}{2}} e^{-\frac{k}{2}},
	\quad 
	(W,W)= (4\pi)^{-k}\Gamma(k),
  \end{equation*}
and the assertion follows by Stirling formula~\cite{Xia07}.

 (ii) If $\pi$ is a principal series representation with trivial central character then
  $W( \am{y} )= y^{\frac12} K_{ir}(2\pi y)$. Hence the assertion follows by the asymptotic behavior of the $K$-Bessel function in the transition range~\cite{Sarn:Morawetz}:
  \begin{equation*}
	\max \abs{W}\asymp r^{\frac16}e^{-\frac{\pi r}{2}}, \quad 
	(W,W)= \frac{1}{4}\Gamma_\BmR(1+2ir)\Gamma_\BmR(1-2ir).
  \end{equation*}
  
 (iii) If $\pi$ is a principal series representation with nontrivial central character then
  $W( \am{y} )= W_{\frac{1}{2},ir}(4\pi y)$. A uniform asymptotic behavior may be found in~\cite{Dunster03}:
  \begin{equation*}
	W_{\kappa,ir}(y) \asymp r^{\frac{1}{6}+\kappa} 
	e^{-\frac{\pi r}{2}}
	\mathrm{Ai}\left(r^{\frac{2}{3}}\eta(\frac{y}{r},\frac{\kappa}{r})\right),
  \end{equation*}
  for all fixed $\kappa \ge 0$ and uniformly in $y\asymp r$ away from the zeros of the Airy function $\mathrm{Ai}(r^{\frac{2}{3}}\eta)$. Here $\eta(.,.)$ is a continuous function on $\BmR\times [0,\infty)$ which is increasing in the first variable. We deduce
  \begin{equation*}
	\max \abs{W}\asymp r^{\frac23}e^{-\frac{\pi r}{2}}, \quad 
	(W,W) = \frac{\pi \MIm \psi(ir)}{\sinh(2\pi r) \Gamma(ir)\Gamma(-ir)}\asymp r e^{-\pi r},
  \end{equation*}
which concludes the proof of the lemma.
\end{proof}

\subsection*{Acknowledgments} 
We thank Kevin Buzzard for comments on Corollary~\ref{cor:newps} and Zeev Rudnick for mentioning the work of Olofsson~\cites{Olof:large,Olof10}. It is a pleasure to thank Bill Casselman for his encouragements at the beginning of this project and Valentin Blomer, Farrell Brumley, Elon Lindenstrauss, Gergely Harcos, Roman Holowinsky, Atsushi Ichino, Nick Katz, Bernhard Kr\"otz, Erez Lapid, Zhengyu Mao, Andre Reznikov, Peter Sarnak and Akshay Venkatesh for helpful discussions and comments.
   This work was partially supported by a grant \#209849 from the Simons Foundation.

%
%



\def\cprime{$'$}\def\cprime{$'$}\def\cprime{$'$}\def\cprime{$'$}\def\cprime{$'$}
\begin{bibdiv}
\begin{biblist}

\bib{AU95}{article}{
      author={Abbes, Ahmed},
      author={Ullmo, Emmanuel},
       title={Comparaison des m\'etriques d'{A}rakelov et de {P}oincar\'e sur
  {$X\sb 0(N)$}},
        date={1995},
        ISSN={0012-7094},
     journal={Duke Math. J.},
      volume={80},
      number={2},
       pages={295\ndash 307},
}

\bib{AL70}{article}{
      author={Atkin, A. O.~L.},
      author={Lehner, J.},
       title={Hecke operators on {$\Gamma \sb{0}(m)$}},
        date={1970},
        ISSN={0025-5831},
     journal={Math. Ann.},
      volume={185},
       pages={134\ndash 160},
}

\bib{BH09}{article}{
      author={Blomer, Valentin},
      author={Holowinsky, Roman},
       title={Bounding sup-norms of cusp forms of large level},
        date={2010},
        ISSN={0020-9910},
     journal={Invent. Math.},
      volume={179},
      number={3},
       pages={645\ndash 681},
         url={http://dx.doi.org/10.1007/s00222-009-0228-0},
}

\bib{BM11}{article}{
      author={Blomer, Valentin},
      author={Michel, Philippe},
       title={Sup-norms of eigenfunctions on arithmetic ellipsoids},
        date={2011},
     journal={Int. Math. Res. Not. IMRN},
      number={21},
       pages={4934\ndash 4966},
}

\bib{BL01}{article}{
      author={Borwein, Peter},
      author={Lockhart, Richard},
       title={The expected {$L_p$} norm of random polynomials},
        date={2001},
        ISSN={0002-9939},
     journal={Proc. Amer. Math. Soc.},
      volume={129},
      number={5},
       pages={1463\ndash 1472 (electronic)},
         url={http://dx.doi.org/10.1090/S0002-9939-00-05690-2},
}

\bib{BL03}{article}{
      author={Bourgain, Jean},
      author={Lindenstrauss, Elon},
       title={Entropy of quantum limits},
        date={2003},
        ISSN={0010-3616},
     journal={Comm. Math. Phys.},
      volume={233},
      number={1},
       pages={153\ndash 171},
}

\bib{Cass73}{article}{
      author={Casselman, William},
       title={On some results of {A}tkin and {L}ehner},
        date={1973},
        ISSN={0025-5831},
     journal={Math. Ann.},
      volume={201},
       pages={301\ndash 314},
}

\bib{book:moduII:deligne}{incollection}{
      author={Deligne, P.},
       title={Formes modulaires et repr{\'e}sentations de {${\rm GL}(2)$}},
        date={1972},
   booktitle={Modular functions of one variable, {II (Proc. Internat. Summer
  School, Univ. Antwerp, Antwerp, 1972)}},
   publisher={Springer},
     address={Berlin},
       pages={55\ndash 105. Lecture Notes in Math., Vol. 349},
}

\bib{Dunster03}{article}{
      author={Dunster, T.~M.},
       title={Uniform asymptotic approximations for the {W}hittaker functions
  {$M_{\kappa,i\mu}(z)$} and {$W_{\kappa,i\mu}(z)$}},
        date={2003},
        ISSN={0219-5305},
     journal={Anal. Appl. (Singap.)},
      volume={1},
      number={2},
       pages={199\ndash 212},
         url={http://dx.doi.org/10.1142/S0219530503000119},
}

\bib{GRS12}{article}{
      author={Ghosh, A.},
      author={Reznikov, A.},
      author={Sarnak, P.},
       title={{N}odal domains of {M}aass forms {I}},
      eprint={http://arxiv.org/abs/1207.6625},
}

\bib{GS12}{article}{
      author={Ghosh, Amit},
      author={Sarnak, Peter},
       title={Real zeros of holomorphic {H}ecke cusp forms},
        date={2012},
        ISSN={1435-9855},
     journal={J. Eur. Math. Soc. (JEMS)},
      volume={14},
      number={2},
       pages={465\ndash 487},
         url={http://dx.doi.org/10.4171/JEMS/308},
}

\bib{HM06}{article}{
      author={Harcos, G.},
      author={Michel, Ph.},
       title={The subconvexity problem for {R}ankin-{S}elberg {$L$}-functions
  and equidistribution of {H}eegner points. {II}},
        date={2006},
        ISSN={0020-9910},
     journal={Invent. Math.},
      volume={163},
      number={3},
       pages={581\ndash 655},
}

\bib{Temp:harcos}{article}{
      author={Harcos, G.},
      author={Templier, N.},
       title={On the sup-norm of {M}aass cusp forms of large level. {I}{I}},
        date={2011},
     journal={Int. Math. Res. Not.},
       pages={Art. ID rnr202, 4764\ndash 4774},
}

\bib{Temp:level-III}{article}{
      author={Harcos, G.},
      author={Templier, N.},
       title={On the sup-norm of {M}aass cusp forms of large level. {I}{I}{I}},
        date={2013},
     journal={Math. Ann.},
      volume={356},
      number={1},
       pages={209\ndash 216},
}

\bib{hecke-werke}{book}{
      author={Hecke, Erich},
       title={Mathematische {W}erke},
     edition={Third Edition},
   publisher={Vandenhoeck \& Ruprecht},
     address={G{\"o}ttingen},
        date={1983},
        ISBN={3-525-40729-7},
        note={With introductory material by B. Schoeneberg, C. L. Siegel and J.
  Nielsen},
}

\bib{Henn02:type}{article}{
      author={Henniart, G.},
       title={Sur l'unicit\'e des types pour {GL(2)}},
        date={2002},
     journal={Duke Math. J.},
      number={2},
       pages={298\ndash 310},
}

\bib{IS95}{article}{
      author={Iwaniec, H.},
      author={Sarnak, P.},
       title={{$L\sp \infty$} norms of eigenfunctions of arithmetic surfaces},
        date={1995},
        ISSN={0003-486X},
     journal={Ann. of Math. (2)},
      volume={141},
      number={2},
       pages={301\ndash 320},
}

\bib{ILS}{article}{
      author={Iwaniec, Henryk},
      author={Luo, Wenzhi},
      author={Sarnak, Peter},
       title={Low lying zeros of families of {$L$}-functions},
        date={2000},
        ISSN={0073-8301},
     journal={Inst. Hautes \'Etudes Sci. Publ. Math.},
      number={91},
       pages={55\ndash 131 (2001)},
}

\bib{Jacq:period-int}{article}{
      author={Jacquet, Herv{\'e}},
       title={Factorization of period integrals},
        date={2001},
        ISSN={0022-314X},
     journal={J. Number Theory},
      volume={87},
      number={1},
       pages={109\ndash 143},
         url={http://dx.doi.org/10.1006/jnth.2000.2584},
}

\bib{JK04}{article}{
      author={Jorgenson, J.},
      author={Kramer, J.},
       title={Bounding the sup-norm of automorphic forms},
        date={2004},
        ISSN={1016-443X},
     journal={Geom. Funct. Anal.},
      volume={14},
      number={6},
       pages={1267\ndash 1277},
         url={http://dx.doi.org/10.1007/s00039-004-0491-6},
}

\bib{JK06}{article}{
      author={Jorgenson, Jay},
      author={Kramer, J{\"u}rg},
       title={Bounds on {F}altings's delta function through covers},
        date={2009},
     journal={Ann. of Math. (2)},
      volume={170},
      number={1},
       pages={1\ndash 43},
}

\bib{LO07}{article}{
      author={Lapid, Erez},
      author={Offen, Omer},
       title={Compact unitary periods},
        date={2007},
        ISSN={0010-437X},
     journal={Compos. Math.},
      volume={143},
      number={2},
       pages={323\ndash 338},
}

\bib{Lau10:omega}{article}{
      author={Lau, Yuk-Kam},
       title={An omega result for supremum norms of {H}ecke-eigenforms in the
  level aspect},
        date={2010},
        ISSN={1674-7283},
     journal={Sci. China Math.},
      volume={53},
      number={1},
       pages={223\ndash 242},
         url={http://dx.doi.org/10.1007/s11425-009-0209-1},
}

\bib{MU98}{article}{
      author={Michel, Ph.},
      author={Ullmo, E.},
       title={Points de petite hauteur sur les courbes modulaires {$X\sb
  0(N)$}},
        date={1998},
        ISSN={0020-9910},
     journal={Invent. Math.},
      volume={131},
      number={3},
       pages={645\ndash 674},
}

\bib{Mili:large}{article}{
      author={Mili{\'c}evi{\'c}, Djordje},
       title={Large values of eigenfunctions on arithmetic hyperbolic
  surfaces},
        date={2010},
        ISSN={0012-7094},
     journal={Duke Math. J.},
      volume={155},
      number={2},
       pages={365\ndash 401},
         url={http://dx.doi.org/10.1215/00127094-2010-058},
}

\bib{book:Miy}{book}{
      author={Miyake, Toshitsune},
       title={Modular forms},
     edition={English},
      series={Springer Monographs in Mathematics},
   publisher={Springer-Verlag},
     address={Berlin},
        date={2006},
        ISBN={978-3-540-29592-1; 3-540-29592-5},
        note={Translated from the 1976 Japanese original by Yoshitaka Maeda},
}

\bib{Olof:large}{article}{
      author={Olofsson, Rikard},
       title={Large supremum norms and small {S}hannon entropy for {H}ecke
  eigenfunctions of quantized cat maps},
        date={2009},
        ISSN={0010-3616},
     journal={Comm. Math. Phys.},
      volume={286},
      number={3},
       pages={1051\ndash 1072},
         url={http://dx.doi.org/10.1007/s00220-008-0627-x},
}

\bib{Olof10}{article}{
      author={Olofsson, Rikard},
       title={Large newforms of the quantized cat map revisited},
        date={2010},
        ISSN={1424-0637},
     journal={Ann. Henri Poincar\'e},
      volume={11},
      number={7},
       pages={1285\ndash 1302},
         url={http://dx.doi.org/10.1007/s00023-010-0057-0},
}

\bib{RS94:eigenstate}{article}{
      author={Rudnick, Ze{\'e}v},
      author={Sarnak, Peter},
       title={The behaviour of eigenstates of arithmetic hyperbolic manifolds},
        date={1994},
        ISSN={0010-3616},
     journal={Comm. Math. Phys.},
      volume={161},
      number={1},
       pages={195\ndash 213},
         url={http://projecteuclid.org/getRecord?id=euclid.cmp/1104269797},
}

\bib{Sarn:Morawetz}{unpublished}{
      author={Sarnak, P.},
       title={Letter to {M}orawetz (2004); {L}etter to {R}eznikov on
  restrictions of eigenfunctions (2008)},
}

\bib{Sarn:schur}{incollection}{
      author={Sarnak, Peter},
       title={Arithmetic quantum chaos},
        date={1995},
   booktitle={The {S}chur lectures (1992) ({T}el {A}viv)},
      series={Israel Math. Conf. Proc.},
      volume={8},
   publisher={Bar-Ilan Univ.},
     address={Ramat Gan},
       pages={183\ndash 236},
}

\bib{Sarnak:GRC}{incollection}{
      author={Sarnak, Peter},
       title={Notes on the generalized {R}amanujan conjectures},
        date={2005},
   booktitle={Harmonic analysis, the trace formula, and {S}himura varieties},
      series={Clay Math. Proc.},
      volume={4},
   publisher={Amer. Math. Soc.},
     address={Providence, RI},
       pages={659\ndash 685},
}

\bib{book:serr68}{book}{
      author={Serre, Jean-Pierre},
       title={Corps locaux},
   publisher={Hermann},
     address={Paris},
}

\bib{Shin10}{article}{
      author={Shin, S.-W.},
       title={Plancherel density theorem for automorphic representations},
     journal={To appear, Israel. J. of Math.},
}

\bib{Sound:extreme}{article}{
      author={Soundararajan, K.},
       title={Extreme values of zeta and {$L$}-functions},
        date={2008},
        ISSN={0025-5831},
     journal={Math. Ann.},
      volume={342},
      number={2},
       pages={467\ndash 486},
         url={http://dx.doi.org/10.1007/s00208-008-0243-2},
}

\bib{Temp:hybrid}{article}{
      author={Templier, N.},
       title={Hybrid sup-norm bounds for {H}ecke-{M}aass cusp forms},
     journal={To appear J. Eur. Math. Soc.},
}

\bib{Temp:sup}{article}{
      author={Templier, N.},
       title={On the sup-norm of {M}aass cusp forms of large level},
        date={2010},
     journal={Selecta Math. (N.S.)},
      volume={16},
      number={3},
       pages={501\ndash 531},
}

\bib{Tunnell:llc}{article}{
      author={Tunnell, Jerrold~B.},
       title={On the local {L}anglands conjecture for {$GL(2)$}},
        date={1978},
        ISSN={0020-9910},
     journal={Invent. Math.},
      volume={46},
      number={2},
       pages={179\ndash 200},
}

\bib{Weinstein09}{article}{
      author={Weinstein, Jared},
       title={Hilbert modular forms with prescribed ramification},
        date={2009},
        ISSN={1073-7928},
     journal={Int. Math. Res. Not. IMRN},
      number={8},
       pages={1388\ndash 1420},
}

\bib{Xia07}{article}{
      author={Xia, Honggang},
       title={On {$L^\infty$} norms of holomorphic cusp forms},
        date={2007},
        ISSN={0022-314X},
     journal={J. Number Theory},
      volume={124},
      number={2},
       pages={325\ndash 327},
         url={http://dx.doi.org/10.1016/j.jnt.2006.09.003},
}

\end{biblist}
\end{bibdiv}

%
%

\end{document}